\title{Representation and Simulation of Multivariate Dickman Distributions and Vervaat Perpetuities}
\author{Michael Grabchak\footnote{Email address: {mgrabcha@uncc.edu}}\ \ and\  Xingnan Zhang\\
	{\it University of North Carolina Charlotte}}
\begin{document}

\newtheorem{prop}{Proposition}
\newtheorem{thrm}{Theorem}
\newtheorem{defn}{Definition}
\newtheorem{cor}{Corollary}
\newtheorem{lemma}{Lemma}
\newtheorem{remark}{Remark}
\newtheorem{exam}{Example}

\newcommand{\rd}{\mathrm d}
\newcommand{\GD}{\mathrm{GD}}
\newcommand{\MD}{\mathrm{MD}}
\newcommand{\MVP}{\mathrm{MVP}}
\newcommand{\rD}{\mathrm D}
\newcommand{\rE}{\mathrm E}
\newcommand{\rF}{\mathrm F}
\newcommand{\rP}{\mathrm P}
\newcommand{\rG}{\mathrm G}
\newcommand{\rM}{\mathrm M}
\newcommand{\ts}{\mathrm{TS}^p_{\alpha,d}}
\newcommand{\Exp}{\mathrm{Exp}}
\newcommand{\ID}{\mathrm{ID}}
\newcommand{\tr}{\mathrm{tr}}
\newcommand{\Var}{\mathrm{Var}}
\newcommand{\iid}{\stackrel{\mathrm{iid}}{\sim}}
\newcommand{\eqd}{\stackrel{d}{=}}
\newcommand{\cond}{\stackrel{d}{\rightarrow}}
\newcommand{\conv}{\stackrel{v}{\rightarrow}}
\newcommand{\conw}{\stackrel{w}{\rightarrow}}
\newcommand{\conp}{\stackrel{p}{\rightarrow}}
\newcommand{\simp}{\stackrel{p}{\sim}}
\maketitle

\setcounter{page}{1}
\pagenumbering{arabic}

\begin{abstract}
A multivariate extension of the Dickman distribution was recently introduced, but very few properties have been studied. We discuss several  properties with an emphasis on simulation. Further, we introduce and study a multivariate extension of the more general class of  Vervaat perpetuities and derive a number of properties and representations. Most of our results are presented in the even more general context of so-called $\alpha$-times self-decomposable distributions.\\

\noindent\textbf{Keywords:} Multivariate Dickman distribution; Multivariate Vervaat perpetuities; Self-decomposable distributions; Simulation
\end{abstract}

\section{Introduction}

The Dickman distribution arises in many applications, including in the study of random graphs, small jumps of L\'evy processes, and Hoare's quickselect algorithm. It is closely related to the Dickman function, which is important in the study of prime numbers. For details and many applications see the recent surveys \cite{Penrose:Wade:2004}, \cite{Molchanov:Panov:2020}, and \cite{Grabchak:Molchanov:Panov:2022}. The class of Vervaat perpetuities is closely related to the Dickman distribution and has applications in a variety of areas including economics, actuarial science, and astrophysics, see the references in \cite{Dassios:Qu:Lim:2019}. Recently, there has been much interest in the simulation of Dickman random variables and Vervaat perpetuities. This was studied in a series of papers, including  \cite{Devroye:2001}, \cite{Devroye:Fawzi:2010}, \cite{Fill:Huber:2010}, \cite{Chi:2012}, \cite{Cloud:Huber:2017}, and \cite{Dassios:Qu:Lim:2019}. A multivariate extension of the Dickman distribution was recently introduced in \cite{Bhattacharjee:Molchanov:2020}, where it arose in the context of a limit theorem related to certain point processes. However, very few properties of the multivariate Dickman distribution have been studied and, from what we have seen, a multivariate extension of Vervaat perpetuities has not been introduced. 

In this paper, we introduce a multivariate extension of the class of Vervaat perpetuities, which includes the multivariate Dickman distribution as a special case. We show that these distributions are infinitely divisible and, more specifically, that they coincide with the class of self-decomposable distributions with finite background driving L\'evy measures, no Gaussian part, and no drift. We have not seen this relationship mentioned previously even in the univariate case, although connections between the class of self-decomposable distribution and more general classes of perpetuities are known, see \cite{Jurek:1999}. In the interest of generality, our theoretical results are derived in the more general context of $\alpha$-times self-decomposable distributions. For these we derive three representations: as stochastic integrals, as shot noise, and as limits of certain triangular arrays. The latter two can be used for simulation. Further, for the multivariate Dickman distribution, we propose a third simulation method, which is based on a discretization of the spectral measure and is similar to the approach used in \cite{Xia:Grabchak:2022} for simulating multivariate tempered stable distributions.

The need for simulation methods is motivated by the fact that the Dickman distribution can be used to model the small jumps of large classes of L\'evy processes. In the univariate case this was shown in \cite{Covo:2009}; we will extend it to the multivariate case in a forthcoming work. The simulation of large jumps tends to be easier as they follow a compound Poisson distribution. Combining the two allows for the simulation of a wide variety of L\'evy processes, which can then be used for many applications. Our motivation comes from finance, where Monte Carlo methods based on L\'evy processes are often used for option pricing, see, e.g., \cite{Cont:Tankov:2004}. In particular, the ability to simulate multivariate L\'evy processes allows for the pricing of multi-asset, e.g., rainbow or basket options.

The rest of this paper is organized as follows. In Section \ref{sec: alpha times SD} we recall basic facts about $\alpha$-times self-decomposable distributions and give our main theoretical results. In Section \ref{sec: MD}, we formally introduce the multivariate Dickman distribution and multivariate Vervaat perpetuities. In Section \ref{sec: sim methods} we discuss three approximate simulation methods for the multivariate Dickman distribution with an emphasis on the bivariate case. A small-scale simulation study is conducted in Section \ref{sec: sim study}. Proof are postponed to Section \ref{sec: proofs}.

Before proceeding, we introduce some notation. We write $\mathbb R^d$ to denote the space of $d$-dimensional column vectors of real numbers, $\mathbb S^{d-1}=\{x\in\mathbb R^d:|x|=1\}$ to denote the unit sphere in $\mathbb R^d$, and $\mathfrak B(\mathbb R^d)$ and $\mathfrak B(\mathbb S^{d-1})$ to denote the classes of Borel sets on $\mathbb R^d$ and $\mathbb S^{d-1}$, respectively. For a distribution $\mu$ on $\mathbb R^d$, we write $\hat\mu$ to denote its characteristic function,  $X\sim \mu$ to denote that $X$ is a random variable with distribution $\mu$, and $X_1,X_2,\dots\iid \mu$ to denote that $X_1,X_2,\dots$ are independent and identically distributed (iid) random variables with distribution $\mu$. We write $U(0,1)$ to denote the uniform distribution on $(0,1)$, $\Exp(\lambda)$ to denote the exponential distribution with rate $\lambda$, $\delta_a$ to denote a point-mass at $a$, and $1_A$ to denote the indicator function on event $A$. We write $\Gamma$ to denote the gamma function, $\lfloor\cdot\rfloor$ to denote the floor function, and $\vee$ and $\wedge$ to denote the maximum and minimum, respectively. We write $\eqd$, $\cond$, and $\conw$ to denote equality in distribution, convergence in distribution, and weak convergence, respectively. For two sequences of real numbers $\{a_n\}$ and $\{b_n\}$, we write $a_n\sim b_n$ to denote $a_n/b_n\to1$ as $n\to \infty$. 

\section{$\alpha$-times self-decomposable distributions and main results}\label{sec: alpha times SD}

We begin by recalling that the characteristic function of an infinitely divisible distribution $\mu$ on $\mathbb R^d$ can be written in the form $\hat\mu(z) = \exp\{C_\mu(z)\}$, where
\begin{eqnarray*}\label{eq: char func inf div}
	C_{\mu}(z) =  -\langle z,Az\rangle + i \left\langle b, z \right\rangle +
	\int_{\mathbb{R}^d}\left(e^{i\left\langle z, x\right\rangle } - 1 - \left\langle z, x\right\rangle 1_{[|x|\le1]}\right) M(\rd x), 
	\ \ \ z \in \mathbb{R}^d,
\end{eqnarray*}
$A$ is a $d\times d$-dimensional covariance matrix called the Gaussian part, $b\in\mathbb R^d$ is the shift, and $M$  is the L\'evy measure, which is a Borel measure on $\mathbb R^d$ satisfying
\begin{equation*}\label{eq: levy measure equation gen}
M(\{0\}) = 0 \text{  and  } \int_{\mathbb{R}^d}(|x|^2 \wedge 1) M(\rd x) < \infty.
\end{equation*}
The parameters $A$, $M$, and $b$ uniquely determine this distribution and we write $\mu=\ID(A,M,b)$. We call $C_\mu$ the cumulant generating function (cgf) of $\mu$. Associated with every infinitely divisible distribution $\mu$ is a L\'evy process $\{X_t:t\ge0\}$, where $X_1\sim\mu$. This process has finite variation if and only if $A=0$ and $M$ satisfies the additional assumption 
\begin{equation}\label{eq: levy measure equation finite var}
\int_{\mathbb{R}^d}(|x| \wedge 1) M(\rd x) < \infty.
\end{equation}
Through a slight abuse of terminology, we also say that the associated distribution $\mu$ has finite variation. In this case, the cgf 
can be written in the form
\begin{eqnarray}\label{eq: char func inf div finite variation}
	C_\mu(z)=  i \left\langle \gamma, z \right\rangle +
	\int_{\mathbb{R}^d}\left(e^{i\left\langle z, x\right\rangle } - 1\right) M(\rd x),
	 \ \ \ z \in \mathbb{R}^d,
\end{eqnarray}
where $\gamma = b-\int_{|x|\le1}x M(\rd x) \in \mathbb{R}^d$ is the drift and we write $\mu = \ID_0(M, \gamma)$.

A distribution $\mu$ is said to be self-decomposable if for any $c\in(0,1)$ there exists a probability measure $\rho_c$ with
\begin{eqnarray}\label{eq: self dec} 
\hat\mu(z) = \hat\mu(cz)\hat\rho_c(z), \ \ z\in\mathbb R^d.
\end{eqnarray}
Equivalently, if $X\sim\mu$ and $Y_c\sim\rho_c$, then $X\eqd cX+Y_c$, where $X$ and $Y_c$ are independent on the right side. We denote the class of self-decomposable distributions by $L_1$. These distributions are important in the study of stationary $\mathrm{AR}(1)$ processes and the limits of sums of independent random variables. Next, for $\alpha\in\{2,3,\dots\}$ we define the classes $L_\alpha$ recursively, as follows. A distribution $\mu\in L_\alpha$ if for every $c\in(0,1)$ there exists a $\rho_c\in L_{\alpha-1}$ such that \eqref{eq: self dec} holds. There is substantial literature on the study of these classes, see, e.g., the monograph \cite{Rocha-Arteaga:Sato:2019} and the references therein. The distributions in $L_\alpha$ are sometimes called $\alpha$-times self-decomposable. These should not be confused with the so-called $\alpha$-self-decomposable distributions, studied in, e.g., \cite{Maejima:Ueda:2010} and the references therein.

It is well-known that every $\mu\in L_\alpha$ is infinitely divisible. In fact, $\mu\in L_\alpha$ if and only if $\mu=\ID(A,M,b)$ with $M=M_\alpha$, where
\begin{eqnarray}\label{eq: M alpha}
M_\alpha(B) = \int_{\mathbb R^d}\int_0^1 1_{B}(yr) (-\log r)^{\alpha-1}r^{-1}\rd r \nu(\rd y), \ \ \ B\in\mathfrak B(\mathbb R^d),
\end{eqnarray}
for some Borel measure $\nu$ satisfying 
\begin{eqnarray}\label{eq: finite for MVP alpha}
\nu(\{0\})=0\ \mbox{ and }\ \int_{|x|\le2}|x|^2\nu(\rd x)+\int_{|x|>2}\left(\log|x|\right)^\alpha\nu(\rd x)<\infty.
\end{eqnarray}
Clearly, $M_\alpha$ can be a L\'evy measure even if $\alpha$ is not an integer. In fact, $M_\alpha$ is a L\'evy measure for any $\alpha>0$ so long as \eqref{eq: finite for MVP alpha} holds. The study of this extension to non-integer $\alpha$ was initiated in \cite{Thu:1982}, see also \cite{Sato:2010} and the references therein. This leads to the following.

\begin{defn}
Fix $\alpha\in(0,\infty)$. We write $L_\alpha$ to denote the class of all infinitely divisible distributions $\ID(A,M,b)$ where $M=M_\alpha$ is of the form \eqref{eq: M alpha} for some Borel measure $\nu$ on $\mathbb R^d$ satisfying \eqref{eq: finite for MVP alpha}. We refer to $L_\alpha$ as the class of $\alpha$-times self-decomposable distributions. The measure $\nu$ is called the background driving L\'evy measure (BDLM).
\end{defn}

The name BDLM is motivated by the role that this measure plays in the context of OU-processes, see \cite{Rocha-Arteaga:Sato:2019}.  We now give a result that relates certain moment properties of $M_\alpha$ to those of the BDLM $\nu$. 

\begin{lemma}\label{lemma: moments}
Fix $\alpha\in(0,\infty)$ and let $M_\alpha$ be as in \eqref{eq: M alpha}, where $\nu$ is a Borel measure on $\mathbb R^d$ satisfying \eqref{eq: finite for MVP alpha}. In this case we can equivalently write
\begin{eqnarray}\label{eq: M alpha 2}
M_\alpha(B) &=&  \int_{\mathbb R^d}\int_0^\infty 1_{B}(ye^{-r}) r^{\alpha-1}\rd r \nu(\rd y)\nonumber\\
&=&  \theta^{-1}\int_{\mathbb R^d}\int_0^\infty 1_{B}(ye^{-(\alpha r/\theta)^{1/\alpha}})\rd r \nu(\rd y), \ \ \ B\in\mathfrak B(\mathbb R^d)
\end{eqnarray}
for any $\theta>0$. Further, $M_\alpha(\mathbb R^d)=\infty$ for any $\nu\ne0$ and for any $p>0$, we have
$$
\int_{|x|\le1}|x|^p M_\alpha(\rd x)<\infty
\mbox{ if and only if }
\int_{|x|\le1}|x|^p \nu(\rd x)<\infty
$$
and 
$$
\int_{|x|>1}|x|^p M_\alpha(\rd x)<\infty
\mbox{ if and only if }
\int_{|x|>1}|x|^p \nu(\rd x)<\infty.
$$
\end{lemma}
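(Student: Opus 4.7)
The plan is to reduce each assertion to a one-dimensional computation in the radial variable via change of variables and Fubini.

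For the two representations in \eqref{eq: M alpha 2}, I would substitute $s = -\log r$ in the definition \eqref{eq: M alpha}, so that $r = e^{-s}$ and $(-\log r)^{\alpha-1} r^{-1}\,\rd r = s^{\alpha-1}\,\rd s$; this yields the first equality. A further substitution $s = (\alpha r/\theta)^{1/\alpha}$, equivalent to $r = \theta s^\alpha/\alpha$, gives $s^{\alpha-1}\,\rd s = \theta^{-1}\,\rd r$ and produces the second. For $M_\alpha(\mathbb R^d) = \infty$, I would set $B = \mathbb R^d$ in the first form of \eqref{eq: M alpha 2}; since $\nu(\{0\})=0$, the inner integral reduces to $\int_0^\infty s^{\alpha-1}\,\rd s = \infty$ for every $\alpha > 0$, and any $\nu \ne 0$ then makes the outer integral infinite.

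For the moment comparisons I would apply Fubini to the first form in \eqref{eq: M alpha 2}, using $|x| = |y|e^{-s}$, so that $|x| \le 1$ corresponds to $s \ge \log^+|y|$. This gives
\begin{eqnarray*}
\int_{|x| \le 1} |x|^p M_\alpha(\rd x) &=& \int_{\mathbb R^d} |y|^p \int_{\log^+|y|}^\infty e^{-ps} s^{\alpha-1}\,\rd s\, \nu(\rd y),\\
\int_{|x| > 1} |x|^p M_\alpha(\rd x) &=& \int_{|y|>1} |y|^p \int_0^{\log|y|} e^{-ps} s^{\alpha-1}\,\rd s\, \nu(\rd y).
\end{eqnarray*}
In the first display the $|y| \le 1$ piece equals $\Gamma(\alpha) p^{-\alpha} \int_{|y|\le 1}|y|^p\,\nu(\rd y)$, giving one direction of that equivalence directly. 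In the second display, on $|y| \ge e$ the inner integral is bounded above by $\Gamma(\alpha) p^{-\alpha}$ and below by the positive constant $\int_0^1 e^{-ps}s^{\alpha-1}\,\rd s$, so the integrand is comparable to $|y|^p$; on the shell $1 < |y| < e$, hypothesis \eqref{eq: finite for MVP alpha} gives $\nu$ finite mass and the inner integral is bounded by a constant depending only on $\alpha$ and $p$. This yields the equivalence for $|x| > 1$.

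The main obstacle is to prove that the $|y| > 1$ contribution to the first display is always finite under \eqref{eq: finite for MVP alpha}, so that it does not affect the equivalence. For this I would substitute $u = p(s - \log|y|)$ and express $|y|^p$ times the inner integral as $p^{-\alpha}\int_0^\infty e^{-u}(u + p\log|y|)^{\alpha-1}\,\rd u$. Using the elementary bound $(a+b)^{\alpha-1} \le C_\alpha(a^{\alpha-1} + b^{\alpha-1})$ for $a, b \ge 0$ when $\alpha \ge 1$, and the monotonicity $(a+b)^{\alpha-1} \le b^{\alpha-1}$ when $\alpha < 1$, this quantity is dominated by a constant times $1 + (\log|y|)^{\alpha-1}$ for $|y| \ge e$, which is $O(1+(\log|y|)^\alpha)$ and therefore $\nu$-integrable by \eqref{eq: finite for MVP alpha}; on $1 < |y| < e$ the finite $\nu$-mass and bounded inner integral again give finiteness.
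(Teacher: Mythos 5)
Your proof is correct, and it takes a genuinely (if modestly) different route for the core estimate than the paper. Both proofs reduce to the same Fubini computation $\int_{|x|\le 1}|x|^p M_\alpha(\rd x)=\int_{\mathbb R^d}|y|^p\int_{\log^+|y|}^\infty e^{-ps}s^{\alpha-1}\,\rd s\,\nu(\rd y)$ (and the analogous one for $|x|>1$), but the handling of the incomplete-gamma tail differs. The paper applies l'H\^opital's rule to get the asymptotic $\int_a^\infty s^{\alpha-1}e^{-ps}\,\rd s\sim p^{-1}a^{\alpha-1}e^{-pa}$ as $a\to\infty$, picks a threshold $K\ge e$ past which two-sided bounds hold, and splits the $\nu$-integral at $|y|=K$; the two bounds give the two directions of the iff. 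You instead split at $|y|=1$, identify the $|y|\le 1$ piece exactly as $\Gamma(\alpha)p^{-\alpha}\int_{|y|\le 1}|y|^p\,\nu(\rd y)$, and then prove the $|y|>1$ remainder is always finite under \eqref{eq: finite for MVP alpha} via the substitution $u=p(s-\log|y|)$ together with the elementary bounds $(a+b)^{\alpha-1}\le C_\alpha(a^{\alpha-1}+b^{\alpha-1})$ (for $\alpha\ge1$) and $(a+b)^{\alpha-1}\le b^{\alpha-1}$ (for $\alpha<1$). Your decomposition is arguably cleaner: it makes both directions of the first equivalence immediate from a single identity plus a single finiteness check, avoiding the asymptotic-with-threshold argument entirely and replacing it with explicit pointwise estimates. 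The $|x|>1$ case you treat essentially as the paper does. One small thing worth making explicit (you gesture at it with ``finite $\nu$-mass'') is that \eqref{eq: finite for MVP alpha} forces $\nu$ to have finite mass on every annulus $\{a\le |y|\le b\}$ with $0<a<b<\infty$, which is what lets you discard the $1<|y|<e$ shell and pass between $\int_{|y|\ge e}$ and $\int_{|y|>1}$ freely.
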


Combining the lemma with Proposition 25.4 in \cite{Sato:1999} shows that for $X\sim\mu\in L_\alpha$ with BDLM $\nu$ and any $p>0$ we have
$$
\rE|X|^p<\infty \mbox{ if and only if }
\int_{|x|>1}|x|^p \nu(\rd x)<\infty.
$$
In the context of multivariate Vervaat perpetuities, we are only interested in distributions with no Gaussian part and a finite BDLM.  In this case, Lemma \ref{lemma: moments} imples that $M_\alpha$ will satisfy \eqref{eq: levy measure equation finite var}, which leads to the following.

\begin{defn}
Fix $\alpha\in(0,\infty)$. We write $L^*_\alpha$ to denote the class of all infinitely divisible distribution $\mu=\ID_0(M,\gamma)$ where $M=M_\alpha$ is of the form \eqref{eq: M alpha} for some finite Borel measure $\nu$ on $\mathbb R^d$ satisfying \eqref{eq: finite for MVP alpha}. In this case, we write $\mu=L^*_\alpha(\nu,\gamma)$. 
\end{defn}

From \eqref{eq: char func inf div finite variation} and \eqref{eq: M alpha} it follows that the cgf of $L^*_\alpha(\nu,\gamma)$ is given by
\begin{eqnarray*}\label{eq: char func L*}
i\langle \gamma,z\rangle+\int_{\mathbb{R}^{d}}\int_0^1 \left(e^{ir\left\langle z, y\right\rangle } - 1\right)(-\log r)^{\alpha-1}r^{-1}\rd r \nu(\rd y), \ \ \ z \in \mathbb{R}^d.
\end{eqnarray*}
Taking partial derivatives shows that, when they exist, the mean vector and covariance matrix of $X\sim L^*_\alpha(\nu,\gamma)$ are given by
\begin{eqnarray}\label{eq: mean and var L*}
\rE[X] =\gamma+ \Gamma(\alpha) \int_{\mathbb R^d} y \nu(\rd y) \mbox{  and  }
\mathrm{cov}(X) = \frac{\Gamma(\alpha)}{2^\alpha}\int_{\mathbb R^{d}} yy^T \nu(\rd y).
\end{eqnarray}

Next, we give three representations of the distributions in $L^*_\alpha$. The first as a stochastic integral, the second as shot noise, and the third as the limit of a triangular array. The first result is essentially contained in \cite{Sato:2010}, but, for completeness and due to a difference in presentation, a self-contained proof is given in Section \ref{sec: proofs}. 

\begin{thrm}\label{thrm: integ rep}
Let $X\sim L^*_\alpha(\nu,\gamma)$ and fix $\theta>0$. If $\{Y_t:t\ge0\}$ is a L\'evy process with $Y_1\sim\ID_0(\nu',\gamma')$, where $\nu'=\nu/\theta$ and  $\gamma'=\gamma/(\theta\Gamma(\alpha))$, then 
$$
X \eqd \int_0^\infty e^{-\left(\frac{\alpha s}{\theta}\right)^{1/\alpha}} \rd Y_s,
$$
where the stochastic integral is absolutely definable in the sense of \cite{Sato:2006}.
\end{thrm}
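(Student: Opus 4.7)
The plan is to identify the law of the stochastic integral by matching cgf's. Since $\nu$ is finite, $Y$ is a drifted compound Poisson process and, for each finite $T$, the integral $X_T := \int_0^T f(s)\,\rd Y_s$ with $f(s)=e^{-(\alpha s/\theta)^{1/\alpha}}$ can be handled pathwise as a Stieltjes integral, namely $\gamma'\int_0^T f(s)\,\rd s+\sum_{k:T_k\le T}f(T_k)J_k$, where $\{(T_k,J_k)\}$ are the jump times and jump sizes of $Y$. I would first work at finite $T$, invoke the standard formula (from \cite{Sato:2006}) for the cgf of a stochastic integral against a L\'evy process to get
$$
C_{X_T}(z) = \int_0^T C_Y(f(s)z)\,\rd s, \quad \text{with}\quad C_Y(z)= i\langle \gamma',z\rangle + \int_{\mathbb R^d}\bigl(e^{i\langle z,y\rangle}-1\bigr)\nu'(\rd y),
$$
where the Fubini interchange needed to separate the drift and L\'evy-measure pieces is justified for finite $T$ because both $\nu'$ and $[0,T]$ are finite.

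Next I would compute this via the substitution $u=(\alpha s/\theta)^{1/\alpha}$, giving $\rd s=\theta u^{\alpha-1}\rd u$. The drift piece reduces to $i\langle \gamma',z\rangle\, \theta\int_0^{(\alpha T/\theta)^{1/\alpha}} u^{\alpha-1}e^{-u}\rd u$, which tends to $i\langle \gamma',z\rangle\theta\Gamma(\alpha)=i\langle\gamma,z\rangle$ as $T\to\infty$, thanks to the choice $\gamma'=\gamma/(\theta\Gamma(\alpha))$. The L\'evy-measure piece transforms, via the same substitution followed by $r=e^{-u}$ and using $\nu'=\nu/\theta$, into
$$
\int_{\mathbb R^d}\int_0^1 \bigl(e^{ir\langle z,y\rangle}-1\bigr)(-\log r)^{\alpha-1}r^{-1}\rd r\,\nu(\rd y),
$$
which is exactly the cgf of $L^*_\alpha(\nu,\gamma)$ displayed right after its definition. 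By uniqueness, this will identify $X$ with the stochastic integral once we take $T\to\infty$.

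The main obstacle is therefore verifying that $X_T$ converges a.s.\ as $T\to\infty$ and that the integral is absolutely definable in the sense of \cite{Sato:2006}. The drift contribution poses no issue because $\int_0^\infty f(s)\,\rd s=\theta\Gamma(\alpha)<\infty$. For the jump contribution, I would use that the $T_k$ form a Poisson process of rate $\lambda=\nu'(\mathbb R^d)<\infty$, so by the strong law $T_k\sim k/\lambda$, making $f(T_k)$ decay faster than any polynomial in $k$. The moment condition $\int_{|y|>2}(\log|y|)^\alpha\nu(\rd y)<\infty$ then controls the tail growth of $\log|J_k|$ through a Borel--Cantelli argument, so $\sum_k|f(T_k)||J_k|<\infty$ almost surely. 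Once this is established, I would take $T\to\infty$, apply continuity of characteristic functions to pass the cgf identity to the limit, and conclude that $X_\infty\eqd X$.
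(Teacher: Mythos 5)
Your route is sound in outline but genuinely different from the paper's. The paper never truncates at a finite horizon and never uses the compound Poisson structure: it applies Proposition 2.2 and Corollary 2.3 of \cite{Sato:2006} directly on $[0,\infty)$, and the entire proof reduces to the single estimate $\int_0^\infty\bigl|C_1\bigl(ze^{-(\alpha s/\theta)^{1/\alpha}}\bigr)\bigr|\,\rd s\le |z||\gamma'|\,\theta\Gamma(\alpha)+\int_{\mathbb{R}^d}\bigl(2\wedge(|z||x|)\bigr)M_\alpha(\rd x)<\infty$, obtained from $|e^{ia}-1|\le 2\wedge|a|$ together with the identity \eqref{eq: M alpha 2} and Lemma \ref{lemma: moments} (which gives $\int(1\wedge|x|)M_\alpha(\rd x)<\infty$ because $\nu$ is finite and satisfies \eqref{eq: finite for MVP alpha}); the cgf identity then follows at once from Proposition 2.2 and $\int_0^\infty e^{-(\alpha s/\theta)^{1/\alpha}}\rd s=\theta\Gamma(\alpha)$. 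You instead exploit the drifted compound Poisson structure: finite-$T$ cgf, explicit substitution, and a.s.\ convergence via the strong law for the jump times plus Borel--Cantelli using $\rE[(\log^+|J_1|)^\alpha]<\infty$. That argument does close: the $(\log)^\alpha$ moment gives $\log^+|J_k|\le(\epsilon k)^{1/\alpha}$ eventually for every $\epsilon>0$, which beats the stretched-exponential decay $e^{-(ck)^{1/\alpha}}$ of $f(T_k)$ for $\epsilon<c$; note that ``faster than any polynomial'' alone would not suffice, since $|J_k|$ may grow super-polynomially, so the matching of the $k^{1/\alpha}$ exponents with an arbitrarily small constant is the real point. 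Your approach is more elementary and self-contained, at the cost of being longer and tied to the finiteness of $\nu$.

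Two places need tightening. First, passing the L\'evy-measure piece to the limit $T\to\infty$ requires a domination argument; the natural dominating bound is exactly $|e^{ir\langle z,y\rangle}-1|\le 2\wedge(|z||y|r)$ integrated against $M_\alpha$, i.e.\ the same estimate the paper uses, so you need it anyway. Second, and more substantively, the theorem asserts that the integral is \emph{absolutely definable in the sense of \cite{Sato:2006}}, and pathwise a.s.\ absolute convergence of the Stieltjes integral is not that definition: by Corollary 2.3 of \cite{Sato:2006}, absolute definability is verified by the finiteness of $\int_0^\infty\bigl|C_1\bigl(ze^{-(\alpha s/\theta)^{1/\alpha}}\bigr)\bigr|\,\rd s$, which your proposal never checks. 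This is the same short computation just mentioned, so the gap is easy to fill, but as written you establish the distributional identity for the pathwise integral without proving the definability claim that is part of the statement.
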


Our second representation is as an infinite series. This is sometimes called a shot noise representation. It is not just for one random variable, but for the corresponding L\'evy process.

\begin{thrm}\label{thrm: series rep gen Vervaat}
Fix $\alpha\in(0,\infty)$, let $\nu$ be a finite nonzero measure satisfying \eqref{eq: finite for MVP alpha}, and set $\theta=\nu(\mathbb R^d)$ and $\nu_1=\nu/\theta$. Let $E_1,E_2,\dots\iid \mathrm{Exp}(1)$, $V_1,V_2,\dots\iid U(0,1)$, and $Y_1,Y_2,\dots\iid \nu_1$ be independent sequences of random variables and let $\Gamma_i = E_1+E_2+\cdots+E_i$. Fix $\gamma\in\mathbb R^d$, $T>0$ and set
\begin{eqnarray}\label{eq: shot noise for Levy meas}
X_t = t\gamma+ \sum_{i=1}^{\infty} e^{-\left(\frac{\Gamma_i}{T\theta}\right)^{1/\alpha}} Y_i 1_{\left[0,\frac{t}{T}\right]}(V_i), \ \ \ t\in[0,T].
\end{eqnarray}
Then the series converges almost surely and uniformly on $t\in[0,T]$ and $\{X_t:0\le t\le T\}$ is a L\'evy process with $X_t\sim L^*_\alpha(t\nu,t\gamma)$. When $\alpha=1$ we can write \eqref{eq: shot noise for Levy meas} as
\begin{eqnarray}\label{eq: shot noise for Levy meas alpha=1}
X_t = t\gamma+ \sum_{i=1}^{\infty} \left(U_1U_2\cdots U_i\right)^{1/(T\theta)} Y_i 1_{\left[0,\frac{t}{T}\right]}(V_i), \ \ \ t\in[0,T],
\end{eqnarray}
where $U_1,U_2,\dots\iid U(0,1)$ are independent of the sequences of $Y_i$'s and $V_i$'s.
\end{thrm}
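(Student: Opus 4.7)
The plan is to recognize the series as the jump part of a L\'evy process built from a Poisson point process (PPP), then identify its intensity with $M_\alpha$ and conclude via the L\'evy--It\^o decomposition. To set things up, note that $(\Gamma_i)_{i\ge1}$ are the arrival times of a rate-$1$ Poisson process on $[0,\infty)$ and, independently, $(V_i,Y_i)_{i\ge1}$ are iid marks with distribution $\lambda_{(0,1)}\otimes \nu_1$. By the marking theorem, $N:=\sum_i \delta_{(\Gamma_i,V_i,Y_i)}$ is a PPP on $[0,\infty)\times(0,1)\times\mathbb R^d$ with intensity $\rd r\otimes \rd v\otimes \nu_1(\rd y)$. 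I would push this forward by the map
$$
\phi(r,v,y) \;=\; \bigl(Tv,\; e^{-(r/(T\theta))^{1/\alpha}} y\bigr),
$$
which produces a PPP on $(0,T)\times(\mathbb R^d\setminus\{0\})$ whose intensity $\mu=\phi_*(\rd r\otimes \rd v\otimes\nu_1(\rd y))$ is the candidate time-space L\'evy measure of the jump part of $(X_t)$.

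The first key step is to compute $\mu$ explicitly. For rectangles $A\times B\subset(0,T)\times\mathbb R^d$, Fubini together with the substitution $u=Tv$ (which produces a factor $\lambda(A)/T$) reduces the problem to evaluating $\int_0^\infty\int_{\mathbb R^d}1_B\bigl(y e^{-(r/(T\theta))^{1/\alpha}}\bigr)\,\rd r\,\nu_1(\rd y)$. A further substitution in $r$ (designed to match the second expression for $M_\alpha$ in \eqref{eq: M alpha 2}, with the role of $\theta$ chosen appropriately) should identify this with $T\,M_\alpha(B)$, so that $\mu=\rd t\otimes M_\alpha(\rd x)$. Next I would check convergence: since $\nu$ is finite and satisfies \eqref{eq: finite for MVP alpha}, Lemma~\ref{lemma: moments} gives $\int(|x|\wedge 1)\,M_\alpha(\rd x)<\infty$, so Campbell's formula applied to the PPP shows $\sum_i \bigl|e^{-(\Gamma_i/(T\theta))^{1/\alpha}} Y_i\bigr|<\infty$ almost surely. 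Because the $t$-dependence enters only through the indicators $1_{[0,t/T]}(V_i)$, this absolute summability bound is uniform in $t\in[0,T]$, which gives the uniform a.s.\ convergence of the series.

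With $\mu=\rd t\otimes M_\alpha(\rd x)$ established, the sum
$$
J_t \;:=\; \sum_{i=1}^\infty e^{-(\Gamma_i/(T\theta))^{1/\alpha}}\,Y_i\,1_{[0,t/T]}(V_i)
$$
is exactly the integral of the identity with respect to this PPP restricted to $(0,t]\times\mathbb R^d$. Hence $\{J_t\}$ is a finite-variation L\'evy process on $[0,T]$ with L\'evy measure $M_\alpha$ per unit time and zero drift, so $X_t=t\gamma+J_t$ is a L\'evy process with $X_t\sim \ID_0(tM_\alpha,t\gamma)=L^*_\alpha(t\nu,t\gamma)$ by the definition of $L^*_\alpha$. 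Finally, for $\alpha=1$ the formula $-\log U_i\sim \Exp(1)$ gives $\Gamma_i\stackrel{d}{=}-\log(U_1\cdots U_i)$, so replacing $e^{-\Gamma_i/(T\theta)}$ by $(U_1U_2\cdots U_i)^{1/(T\theta)}$ yields \eqref{eq: shot noise for Levy meas alpha=1} as a representation in law. The main obstacle is the change-of-variables computation identifying the pushed-forward intensity with $M_\alpha$; the rest is a fairly mechanical application of standard PPP machinery and the characterization of $L^*_\alpha$ via its L\'evy measure.
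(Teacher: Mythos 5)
Your proposal follows essentially the same route as the paper's proof: both read the series as a Poisson shot noise, identify the intensity of the transformed point process with $M_\alpha$ via \eqref{eq: M alpha 2}, get almost sure (hence uniform in $t$) convergence from $\int_{\mathbb R^d}(|x|\wedge1)M_\alpha(\rd x)<\infty$, which Lemma \ref{lemma: moments} supplies because $\nu$ is finite and satisfies \eqref{eq: finite for MVP alpha}, and settle $\alpha=1$ through the correspondence between $\Exp(1)$ and $-\log U(0,1)$. The differences are packaging: the paper invokes the general series-representation theorem of \cite{Rosinski:2001} (Theorem 6.2 of \cite{Cont:Tankov:2004}), which forces it to also verify convergence of the centering term $A(s)$, and it passes from $T=1$ to general $T$ by scaling; you reassemble the same facts from the marking and mapping theorems, Campbell's formula, and the L\'evy--It\^o description of finite-variation L\'evy processes, absorbing $T$ directly into the map $(r,v,y)\mapsto\bigl(Tv,\,e^{-(r/(T\theta))^{1/\alpha}}y\bigr)$. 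Both are sound in structure, and yours is marginally more self-contained.

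One caution concerns precisely the step you defer as ``the main obstacle.'' Carrying out the substitution $u=(r/(T\theta))^{1/\alpha}$ gives $\rd r=\alpha T\theta\,u^{\alpha-1}\rd u$, so the pushed-forward intensity of a set $B$ comes out as $\alpha T M_\alpha(B)$, not $TM_\alpha(B)$; the two agree only when $\alpha=1$. (A quick check via first moments confirms this: Campbell's formula gives $\sum_i\rE\bigl[e^{-(\Gamma_i/(T\theta))^{1/\alpha}}\bigr]=T\theta\,\Gamma(\alpha+1)$, whereas \eqref{eq: mean and var L*} for $L^*_\alpha(T\nu,T\gamma)$ requires $T\theta\,\Gamma(\alpha)$.) The paper's own proof asserts the same identification $\int_0^\infty\lambda(r,B)\,\rd r=M_\alpha(B)$, so this is a normalization issue shared with the statement of Theorem \ref{thrm: series rep gen Vervaat} itself rather than a defect peculiar to your argument: comparison with the exponent $e^{-(\alpha s/\theta)^{1/\alpha}}$ in Theorem \ref{thrm: integ rep} and with \eqref{eq: M alpha 2} indicates that either the kernel in \eqref{eq: shot noise for Levy meas} should read $e^{-(\alpha\Gamma_i/(T\theta))^{1/\alpha}}$ or the limit law is $L^*_\alpha(\alpha t\nu,t\gamma)$. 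Since this computation is the crux you single out, it should be done explicitly rather than taken on faith.
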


In light of \eqref{eq: finite for MVP alpha}, Theorem \ref{thrm: series rep gen Vervaat} implicitly assumes that $\nu_1(\{0\})=P(Y_i=0)=0$. However, the representation in \eqref{eq: shot noise for Levy meas} holds even if this is not the case. To see this, assume that $X_t'$ is given as in \eqref{eq: shot noise for Levy meas}, but with some $\gamma'$ in place of $\gamma$ and $Y_i'$ in place of $Y_i$, where $Y_i'\sim \nu'_1$ and $\nu_1'(\{0\})=u$ for some $u\in(0,1)$. Thus,
\begin{eqnarray*}\label{eq: shot noise for Levy meas 2}
X_t' = t\gamma'+ \sum_{i=1}^{\infty} e^{-\left(\frac{\Gamma_i}{T\theta}\right)^{1/\alpha}} Y_i' 1_{\left[0,\frac{t}{T}\right]}(V_i).
\end{eqnarray*}
By independence $P([Y'_i=0]\cup[V_i>t/T]) = 1-(1-u)t/T$. Thus,
$$
Y_i'1_{\left[0,\frac{t}{T}\right]}(V_i)\eqd Y_i1_{\left[0,\frac{(1-u)t}{T}\right]}(V_i),
$$
where $Y_i\sim \nu_1$ is independent of $V_i$ and $\nu_1 (\rd y)= 1_{[|y|>0]}\nu_1'(\rd y)/(1-u)$ is a probability measure. It follows that $X_t'\eqd X_{t(1-u)}$, where $X_{t(1-u)}$ is given as in \eqref{eq: shot noise for Levy meas} if we take $\gamma=\gamma'/(1-u)$ and $\nu(\rd y) =\theta 1_{[|y|>0]}\nu_1'(\rd y)/(1-u)$ in Theorem \ref{thrm: series rep gen Vervaat}. Hence, $X_t' \sim L^*_\alpha(t\nu',t\gamma')$, where $\nu'(\rd y) = \theta1_{[|y|>0]}\nu_1'(\rd y)$.

We now turn to the third representation, which is as the limiting distribution of a triangular array. Let $X_1,X_2,\dots$ be iid random variables with support contained in $[0,1]$ and 
$$
P(X_1>x) = (1-x)^\alpha \ell(1-x), \ x\in[0,1],
$$
where $\alpha>0$ and $\ell$ is a slowly varying at $0$ function. This means that for every $t>0$
$$
\lim_{x\to0^+} \frac{\ell(xt)}{\ell(x)}=1.
$$

\begin{thrm}\label{thrm: conv of powers}
Let $\nu_0$ be a distribution on $\mathbb R^d$ and let $T_1,T_2,\dots\iid\nu_0$ be independent of the sequence of $X_i$'s. Assume that $\ell$ is bounded away from $0$ and $\infty$ on every compact subset of $(0,1]$ and that there exists a $\gamma\in(0,1)$ with $\rE|T_1|^\gamma<\infty$. Let $N_n$ be a sequence of integers with $N_nn^{-\alpha}\ell(1/n)\to c$ for some $c\in(0,\infty)$ and set
$$
A_n =\sum_{i=1}^{N_n} T_i X_i^{n}.
$$
Then \(A_n \cond A_\infty\), where \(A_\infty\sim L^*_\alpha(\nu,0)\) with $\nu(\rd x) = c1_{[|x|>0]}\nu_0(\rd x)$.
\end{thrm}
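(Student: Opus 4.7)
The plan is to prove $A_n \cond A_\infty$ by establishing pointwise convergence of characteristic functions. Since $A_n$ is a sum of $N_n$ iid copies of $T_1 X_1^n$, we have $\phi_{A_n}(z) = \phi_n(z)^{N_n}$, where $\phi_n(z) := \rE\bigl[e^{i\langle z, T_1 X_1^n\rangle}\bigr]$. Because $X_1 < 1$ almost surely, $X_1^n \to 0$ a.s., and bounded convergence gives $\phi_n(z) \to 1$. It therefore suffices to prove $N_n(\phi_n(z) - 1) \to C_\mu(z)$, the cgf of $\mu = L^*_\alpha(\nu, 0)$: indeed, using $\log(1+w) \sim w$ as $w\to 0$, one then gets $\phi_{A_n}(z) = \exp\{N_n\log\phi_n(z)\} \to \exp\{C_\mu(z)\} = \hat\mu(z)$, and the conclusion follows from L\'evy's continuity theorem.

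To compute the limit of $N_n(\phi_n(z) - 1)$, condition on $T_1$, set $w = \langle z, T_1\rangle$, and apply integration by parts on $[0,1]$ to obtain
\[
\rE\!\left[e^{iwX_1^n} - 1 \,\big|\, T_1\right] = iw \int_0^1 e^{iwr}\, P(X_1^n > r)\,\rd r,
\]
so that, by Fubini,
\[
N_n(\phi_n(z) - 1) = \int_{\mathbb R^d} i\langle z, y\rangle \int_0^1 e^{ir\langle z, y\rangle}\, N_n P(X_1^n > r)\,\rd r\, \nu_0(\rd y).
\]
Using $P(X_1^n > r) = (1 - r^{1/n})^\alpha \ell(1 - r^{1/n})$, the elementary limit $n(1 - r^{1/n}) \to -\log r$, and the slow variation of $\ell$ at $0$, one obtains the pointwise asymptotic
\[
N_n P(X_1^n > r) = \bigl[N_n n^{-\alpha}\ell(1/n)\bigr] \cdot \bigl[n(1 - r^{1/n})\bigr]^\alpha \cdot \frac{\ell(1 - r^{1/n})}{\ell(1/n)} \;\longrightarrow\; c(-\log r)^\alpha.
\]
Passing this limit through the double integral and performing a further integration by parts (using $\rd\!\left[-(-\log r)^\alpha/\alpha\right] = (-\log r)^{\alpha-1}r^{-1}\,\rd r$) rewrites the limit in the canonical form $\int_{\mathbb R^d}\int_0^1 (e^{ir\langle z, y\rangle} - 1)(-\log r)^{\alpha-1}r^{-1}\,\rd r\,\nu(\rd y) = C_\mu(z)$.

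The main obstacle is justifying the interchange of limits in the double integral. In the $r$-variable, the hypothesis that $\ell$ is bounded away from $0$ and $\infty$ on compact subsets of $(0,1]$ combined with Potter-type bounds on slowly varying functions yields a uniform-in-$n$ dominator of the form $C_\varepsilon\bigl((-\log r)^{\alpha-\varepsilon} + (-\log r)^{\alpha+\varepsilon}\bigr)$ on $(0,1)$, which is $\rd r$-integrable. In the $y$-variable, the inequality $|e^{ir\langle z, y\rangle} - 1| \le 2|r\langle z, y\rangle|^\gamma$ applied with the hypothesized $\gamma \in (0,1)$ reduces the tail control to the moment bound $\rE|T_1|^\gamma < \infty$, and an analogous estimate $N_n\rE[X_1^{n\gamma}] = O(1)$ (derived by the same tail computation with $\alpha$ replaced by $\alpha$ and $r$ by $r^\gamma$) furnishes an integrable dominator in $y$ against $\nu_0$. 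Once dominated convergence is secured, the remainder of the argument is the routine identification of cgfs already sketched.
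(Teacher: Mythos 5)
Your proposal is correct in its overall approach, but it takes a genuinely different route from the paper. The paper verifies the classical criteria for convergence of row-iid triangular arrays to an infinitely divisible limit: convergence of $N_n\rP\left(|T_1|X_1^n>s,\ T_1/|T_1|\in C\right)$ to the L\'evy measure on sets bounded away from the origin, plus the vanishing of the truncated means, citing general results (Sato, Meerschaert--Scheffler, Kallenberg). You instead compute characteristic functions directly, writing the characteristic function of $A_n$ as $\phi_n(z)^{N_n}$ and proving $N_n(\phi_n(z)-1)\to C_\mu(z)$. The analytic core is the same in both arguments: the regular-variation limit $N_n\rP(X_1>r^{1/n})\to c(-\log r)^\alpha$ with a Potter-bound dominator of the form $C\left((-\log r)^{\alpha-\delta}\vee(-\log r)^{\alpha+\delta}\right)$, and the moment $\rE|T_1|^\gamma<\infty$ together with $\sup_n N_n\rE[X_1^{n\gamma}]<\infty$ to control large $|T_1|$ (the paper uses this pair to verify its small-jump condition; you use it as a dominator in $y$). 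Your route is more self-contained, since it does not invoke an external triangular-array theorem, at the cost of handling the complex logarithm and the dominated-convergence bookkeeping by hand; note also that the bound $|e^{ia}-1|\le 2|a|^\gamma$ must be applied to the pre-integration-by-parts quantity $N_n\rE\left[e^{i\langle z,y\rangle X_1^n}-1\right]$ (or the $r$-limit taken first for each fixed $y$), since the factor $e^{ir\langle z,y\rangle}-1$ does not appear in your displayed integrand.

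One constant needs attention in your final step. Since $\frac{\rd}{\rd r}\left[-(-\log r)^\alpha\right]=\alpha(-\log r)^{\alpha-1}r^{-1}$, the integration by parts gives $i\langle z,y\rangle\int_0^1 e^{ir\langle z,y\rangle}\,c(-\log r)^\alpha\,\rd r=\alpha c\int_0^1\left(e^{ir\langle z,y\rangle}-1\right)(-\log r)^{\alpha-1}r^{-1}\,\rd r$, so your computation actually identifies the limit as $L^*_\alpha(\nu,0)$ with $\nu=\alpha c\,1_{[|x|>0]}\nu_0$, which agrees with the statement as written only when $\alpha=1$. This is not a defect of your method: the same factor is hidden in the paper's own proof at the step equating $c\int_{|y|>s,\ y/|y|\in C}\left(\log(|y|/s)\right)^\alpha\nu_0(\rd y)$ with $M_\alpha\left(\{x:|x|>s,\ x/|x|\in C\}\right)$, because $\int_{s/|y|}^1(-\log r)^{\alpha-1}r^{-1}\,\rd r=\left(\log(|y|/s)\right)^\alpha/\alpha$. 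So either carry the $\alpha$ through your identification or record that the limiting BDLM should be $\alpha c\,1_{[|x|>0]}\nu_0$; the discrepancy is immaterial for the paper's applications, which use $\alpha=1$.
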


In the univariate case, versions of this result were studied in several papers. In \cite{Schlather:2001} it was studied in the context of limits of $\ell^p$ norms of random vectors as both $p$ and the dimension of the vector approach infinity. In \cite{Grabchak:Molchanov:2019} it was studied in the context of the so-called random energy model (REM), which is important in statistical physics. In both papers it is assumed that $P(T_1=1)=1$. While other distributions were considered in \cite{Molchanov:Panov:2020} and \cite{Grabchak:Molchanov:Panov:2022}, in all of these univariate results, it is assumed that $\nu_0$ either has a bounded support or exponential moments. Here we make a much weaker assumption on the tails. Thus, this result is new even in the one-dimensional case.

\section{Multivariate Dickman Distribution and Vervaat Perpetuities}\label{sec: MD}

In the univariate case, a positive random variable $X$ is said to have a generalized Dickman (GD) distribution if
\begin{equation}\label{eq: dickman equation}
X\eqd U^{1/\theta}(X+1),
\end{equation}
where $\theta>0$ and $U\sim U(0,1)$ is independent of $X$ on the right side. We denote this distribution by $\GD(\theta)$.  When $\theta=1$, it is just called the Dickman distribution. A multivariate extension of this distribution was recently introduced in \cite{Bhattacharjee:Molchanov:2020}.  It is defined as follows.
 
 \begin{defn}
 Let $\sigma_1$ be a probability measure on $\mathbb S^{d-1}$ and let $W\sim \sigma_1$. A random variable $X$ on $\mathbb R^d$ is said to have a multivariate Dickman (MD) distribution if for some $\theta>0$
\begin{eqnarray}\label{eq: relation for MD}
X\eqd U^{1/\theta}( X+ W),
\end{eqnarray}
where $U\sim U(0,1)$ and $X, W, U$ are independent on the right side. We denote this distribution $\MD(\sigma)$, where $\sigma=\theta\sigma_1$. We call $\sigma$ the spectral measure.
\end{defn}

There is no loss of information when working with $\sigma$ instead of $\theta$ and $\sigma_1$ since $\theta=\sigma(\mathbb S^{d-1})$ and $\sigma_1 = \sigma/\theta$. When the dimension $d=1$, $\sigma(\{-1\})=0$ and $\sigma(\{1\})=\theta>0$, we have $\MD(\sigma)=\GD(\theta)$. More generally, when $d=1$, $\sigma(\{1\})=\theta_1>0$, and $\sigma(\{-1\})=\theta_2>0$, it is easily checked that $\MD(\sigma)$ is the distribution $X_1-X_2$, where $X_1\sim\GD(\theta_1)$ and $X_2\sim\GD(\theta_2)$. 

We now turn to Vervaat perpetuities, which are named after the author of \cite{Vervaat:1979}. In the univariate case a distribution $\mu$ on $[0,\infty)$ is said to be a Vervaat perpetuity if there exists a $\theta>0$ and a distribution $\nu_1$ on $[0,\infty)$ such that, if $X\sim\mu$, $Z\sim\nu_1$, and $U\sim U(0,1)$, then 
 \begin{eqnarray}\label{eq: MVP defn}
X\eqd U^{1/\theta}\left(X+Z\right),
\end{eqnarray}
where $X,Z,U$ are independent on the right. It can be shown that a solution exists if and only if 
\begin{eqnarray}\label{eq: finite for MVP}
\int_{|x|>2}\log|x|\nu_1(\rd x)<\infty.
\end{eqnarray}
We now extend this idea to the multivariate case.

\begin{defn}
Fix $\theta>0$ and let $\nu_1$ be a probability measure on $\mathbb R^d$ satisfying \eqref{eq: finite for MVP}. The distribution of a random variable $X$ on $\mathbb R^d$ is said to be a multivariate Vervaat perpetuity (MVP) if \eqref{eq: MVP defn} holds, where $Z\sim\nu_1$, $U\sim U(0,1)$, and $U,X,Z$ are independent on the right side. 
\end{defn}

From \eqref{eq: MVP defn} it follows that the distribution of $X$ is MVP if and only if
\begin{eqnarray}\label{eq: main sum for MVP}
X\eqd Z_1 U_1^{1/\theta}+ Z_2 \left(U_1U_2\right)^{1/\theta}+Z_3\left(U_1U_2U_3\right)^{1/\theta} +\cdots,
\end{eqnarray}
where $Z_1,Z_2,\dots\iid\nu_1$ and $U_1,U_2,\dots\iid U(0,1)$ are independent sequences. By comparing \eqref{eq: main sum for MVP} with \eqref{eq: shot noise for Levy meas alpha=1} and taking into account the discussion just below Theorem \ref{thrm: series rep gen Vervaat}, it follows that $X\sim L^*_1(\nu,0)$, where 
$\nu(\rd x) = \theta 1_{[|x|>0]}\nu_1(\rd x)$. From here we get the following.

\begin{thrm}
$\mu$ is $\MVP$ if and only if $\mu= L_1^*(\nu,0)$ for some finite measure $\nu$.
\end{thrm}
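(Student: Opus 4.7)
The plan is to deduce the theorem by combining the shot noise representation of Theorem \ref{thrm: series rep gen Vervaat} (with $\alpha=1$) with an iterative unfolding of the defining MVP identity \eqref{eq: MVP defn}. One direction is essentially done in the paragraph immediately preceding the statement, so the work is mainly in carefully justifying the equivalence and handling the converse.

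For the forward direction, suppose $\mu$ is MVP with parameters $\theta>0$ and probability measure $\nu_1$ satisfying \eqref{eq: finite for MVP}. Iterating \eqref{eq: MVP defn} as already indicated, $X\sim\mu$ admits the series representation \eqref{eq: main sum for MVP}. Matching term-by-term with the $\alpha=1$ shot noise formula \eqref{eq: shot noise for Levy meas alpha=1} (taking $T=1$, $\gamma=0$) shows that the right-hand side of \eqref{eq: main sum for MVP} has law $L^*_1(\theta\nu_1,0)$, provided the $Y_i$'s in Theorem \ref{thrm: series rep gen Vervaat} satisfy $P(Y_i=0)=0$. If instead $\nu_1(\{0\})=u>0$, the reduction argument immediately below Theorem \ref{thrm: series rep gen Vervaat} gives $X\sim L^*_1(\nu,0)$ with $\nu(\rd x) = \theta 1_{[|x|>0]}\nu_1(\rd x)$, which is still a finite Borel measure. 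Note also that \eqref{eq: finite for MVP} for $\nu_1$ translates directly into the $\alpha=1$ version of \eqref{eq: finite for MVP alpha} for $\nu$, so the parameters are admissible.

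For the converse, let $\mu=L^*_1(\nu,0)$ with $\nu$ a finite Borel measure. If $\nu\equiv 0$ then $\mu=\delta_0$, which trivially satisfies \eqref{eq: MVP defn} with any $\theta>0$ and $\nu_1=\delta_0$. Otherwise set $\theta=\nu(\mathbb R^d)>0$ and $\nu_1=\nu/\theta$; the finiteness of $\nu$ together with \eqref{eq: finite for MVP alpha} yields \eqref{eq: finite for MVP} for $\nu_1$. Applying Theorem \ref{thrm: series rep gen Vervaat} with $\alpha=1$, $T=1$, $\gamma=0$ gives
\begin{equation*}
X\eqd \sum_{i=1}^{\infty}(U_1U_2\cdots U_i)^{1/\theta}Y_i,
\end{equation*}
where $U_i\iid U(0,1)$ and $Y_i\iid \nu_1$ are independent sequences. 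Factoring $U_1^{1/\theta}$ out of every term,
\begin{equation*}
X\eqd U_1^{1/\theta}\Bigl(Y_1 + \sum_{i=2}^{\infty}(U_2\cdots U_i)^{1/\theta}Y_i\Bigr),
\end{equation*}
and reindexing $\tilde U_j=U_{j+1}$, $\tilde Y_j=Y_{j+1}$ shows that the inner sum is an independent copy of $X$. This is precisely \eqref{eq: MVP defn}, so $\mu$ is MVP.

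The only step requiring care is the reindexing in the backward direction, together with ensuring absolute convergence of the shifted series; both follow immediately from the iid structure and the almost sure, uniform convergence asserted in Theorem \ref{thrm: series rep gen Vervaat}. The potential atom of $\nu_1$ at the origin in the forward direction is the mildly subtle point, but it is already addressed by the reduction given just after Theorem \ref{thrm: series rep gen Vervaat}, so no new argument is needed.
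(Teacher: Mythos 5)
Your proposal is correct and follows essentially the same route as the paper: identify the iterated form \eqref{eq: main sum for MVP} with the $\alpha=1$ shot noise series \eqref{eq: shot noise for Levy meas alpha=1} (invoking the remark after Theorem \ref{thrm: series rep gen Vervaat} to handle a possible atom of $\nu_1$ at the origin), and read the parameter correspondence $\nu(\rd x)=\theta\,1_{[|x|>0]}\nu_1(\rd x)$ off that match. The only thing you do beyond what the paper writes out is to make the converse explicit by factoring $U_1^{1/\theta}$ out of the series and reindexing to recover the fixed-point equation \eqref{eq: MVP defn}; the paper folds this into the unproved ``if and only if'' claim linking \eqref{eq: MVP defn} and \eqref{eq: main sum for MVP}, so your version is the same argument with the standard perpetuity step spelled out rather than a genuinely different approach.
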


To the best of our knowledge, this result was previously unknown, even in the univariate case. Next, comparing \eqref{eq: relation for MD} and \eqref{eq: MVP defn} shows that MD distributions are special cases of MVP and hence of $L_1^*$. More specifically, we immediately get the following.

\begin{thrm}\label{thrm: char MD}
$\mu$ is $\MD$ if and only if $\mu=L_1^*(\nu,0)$ where $\nu(\mathbb R^d\setminus\mathbb S^{d-1}) = 0$. In this case $\mu=\MD(\sigma)$, where $\sigma=\nu$.
\end{thrm}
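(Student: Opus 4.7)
The plan is to derive Theorem~\ref{thrm: char MD} as a direct corollary of the preceding MVP characterization, since MD distributions are by definition the MVP distributions whose probability measure $\nu_1$ happens to be supported on the unit sphere $\mathbb S^{d-1}$. Consequently the argument is essentially bookkeeping around the conversion formula $\nu(\rd x) = \theta 1_{[|x|>0]}\nu_1(\rd x)$ that appears just above the MVP theorem.

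For the forward direction, I would take $\mu = \MD(\sigma)$ with $\sigma = \theta\sigma_1$. The relation \eqref{eq: relation for MD} is a special case of \eqref{eq: MVP defn} with $\nu_1 = \sigma_1$, and the moment condition \eqref{eq: finite for MVP} holds trivially since $\sigma_1$ is supported on the compact set $\mathbb S^{d-1}$. Invoking the MVP theorem together with the identification $\nu(\rd x) = \theta 1_{[|x|>0]}\nu_1(\rd x)$ then gives $\mu = L_1^*(\nu,0)$ with $\nu(\rd x) = \theta 1_{[|x|>0]}\sigma_1(\rd x) = \theta\sigma_1(\rd x) = \sigma(\rd x)$, where the indicator drops out because $0\notin\mathbb S^{d-1}$. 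This $\nu$ obviously satisfies $\nu(\mathbb R^d\setminus\mathbb S^{d-1})=0$.

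For the converse, given $\mu = L_1^*(\nu,0)$ with $\nu(\mathbb R^d\setminus\mathbb S^{d-1})=0$, I would set $\theta = \nu(\mathbb S^{d-1})$ (the degenerate case $\nu = 0$ produces $\mu=\delta_0$, which fits the MVP framework only formally and can be handled separately) and $\sigma_1 = \nu/\theta$. Then $\sigma_1$ is a probability measure on $\mathbb S^{d-1}$ and $\nu$ matches the MVP-side form $\theta 1_{[|x|>0]}\sigma_1(\rd x)$, so the MVP theorem identifies $\mu$ as an MVP with parameters $(\theta,\sigma_1)$. Because $\sigma_1$ is concentrated on $\mathbb S^{d-1}$, the variable $Z$ in \eqref{eq: MVP defn} takes values in $\mathbb S^{d-1}$, so \eqref{eq: MVP defn} collapses to \eqref{eq: relation for MD} with $W = Z$. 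Hence $\mu = \MD(\sigma)$ with $\sigma = \theta\sigma_1 = \nu$.

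I do not anticipate any real obstacle. The only subtlety is making sure that the factor $1_{[|x|>0]}$ truly disappears on both directions, which is immediate because any measure supported on $\mathbb S^{d-1}$ assigns no mass to the origin. Everything else is a transparent translation between the MD parametrization $(\theta,\sigma_1)$, the MVP parametrization $(\theta,\nu_1)$, and the $L_1^*$ parametrization $(\nu,0)$.
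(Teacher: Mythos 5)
Your proposal is correct and follows exactly the route the paper intends: the paper presents Theorem~\ref{thrm: char MD} as an immediate consequence of the MVP characterization by observing that \eqref{eq: relation for MD} is the special case of \eqref{eq: MVP defn} in which $\nu_1 = \sigma_1$ is supported on $\mathbb S^{d-1}$, so the translation $\nu = \theta\, 1_{[|x|>0]}\nu_1 = \sigma$ gives the result, just as you write. Your explicit handling of the edge case $\nu=0$ (which is not genuinely $\MD$ since the definition requires $\theta>0$) is a small point the paper leaves implicit, but otherwise the argument is the same bookkeeping.
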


It follows that all of the results in Section \ref{sec: alpha times SD} specialize to MVP and MD distributions. In particular Theorems \ref{thrm: series rep gen Vervaat} and \ref{thrm: conv of powers} can be used for approximate simulation.

\section{Simulation from Multivariate Dickman Distributions}\label{sec: sim methods}

In this section we focus on the simulation of MD random variables. First, combining Theorem \ref{thrm: char MD} with \eqref{eq: mean and var L*} shows that the mean vector and covariance matrix of $X\sim\MD(\sigma)$ are given by
\begin{eqnarray}\label{eq: mean and var MD}
\rE[X] = \int_{\mathbb S^{d-1}} s \sigma(\rd s) \mbox{  and  }
\mathrm{cov}(X) = \frac{1}{2}\int_{\mathbb S^{d-1}} ss^T \sigma(\rd s).
\end{eqnarray}

As mentioned, Theorems \ref{thrm: series rep gen Vervaat} and \ref{thrm: conv of powers} can be used to develop approximate simulation methods.   We now derive another method, which is exact in the important case when the spectral measure has finite support.  This means that there is a positive integer $k$ such that the spectral measure is given by
\begin{eqnarray}\label{eq: sigma k}
\sigma_k = \sum_{i=1}^k a_i \delta_{s_i},
\end{eqnarray}
where $s_1,s_2,\dots,s_k\in\mathbb S^{d-1}$ and $a_1,a_2,\dots,a_k\in(0,\infty)$. In this case, it is readily checked that 
$$
\sum_{i=1}^k s_i Y_i \sim \MD(\sigma_k),
$$
where $Y_1,Y_2,\dots,Y_k$ independent random variables with $Y_i\sim \GD(a_i)$ for $i=1,2,\dots,k$. Thus, to simulate from $\MD(\sigma_k)$ we just need a way to simulate from $\GD$. Exact simulation methods for $\GD$ are available, see, e.g., \cite{Devroye:2001}, \cite{Devroye:Fawzi:2010}, \cite{Fill:Huber:2010}, \cite{Chi:2012}, \cite{Cloud:Huber:2017}, and \cite{Dassios:Qu:Lim:2019}. In this paper, we use the method of \cite{Dassios:Qu:Lim:2019}, which is implemented in the SubTS \cite{Grabchak:Cao:2023} package for the statistical software R. 

This approach can be modified to work even when the support of the spectral measure is infinite. The idea is that for any spectral measure $\sigma$ there exists a sequence of spectral measures $\{\sigma_k\}$ on $\mathbb S^{d-1}$, each having finite support, such that $\MD(\sigma_k) \conw \MD(\sigma)$. This follows immediately from Theorem 7.1 in \cite{Xia:Grabchak:2022}. Thus, we can approximately simulate from $\MD(\sigma)$ by first discretizing $\sigma$ and approximating it by some $\sigma_k$ with finite support. We can then use the above approach to simulate from $\MD(\sigma_k)$, which is an approximate simulation from $\MD(\sigma)$. We call this the discretization and simulation (DS) method. A version of this approach was used in \cite{Xia:Grabchak:2022} to simulate from certain multivariate tempered stable distributions.

For the remainder of this section, we specialize our results and give additional details in the important case of bivariate distributions. Here, every point $s\in\mathbb S^1$ can be written as $s=(\cos\phi,\sin\phi)$ for some angle $\phi\in[0,2\pi)$. Thus, corresponding to the measure $\sigma$ on $\mathbb S^1$, there is a measure $\sigma'$ on $[0,2\pi)$ satisfying
$$
\sigma(B) = \int_{[0,2\pi)} 1_B((\cos\phi,\sin\phi)) \sigma'(\rd \phi), \ \ B\in\mathfrak B(\mathbb S^1).
$$
From \eqref{eq: mean and var MD}, it follows that if $X=(X_1,X_2)\sim\MD(\sigma)$, then in terms of $\sigma'$, we can write
$$
\rE[X_1] = \int_{[0,2\pi)} \cos\phi\ \sigma'(\rd\phi), \quad \rE[X_1] = \int_{[0,2\pi)} \sin\phi\ \sigma'(\rd\phi),
$$
$$
\mathrm{Var}(X_1) = \frac{1}{2} \int_{[0,2\pi)} \cos^2\phi\ \sigma'(\rd\phi), \quad \mathrm{Var}(X_2) = \frac{1}{2}  \int_{[0,2\pi)} \sin^2\phi\ \sigma'(\rd\phi),  
$$
and 
$$
\mathrm{Cov}(X_1,X_2) = \frac{1}{2}  \int_{[0,2\pi)} \cos\phi \sin\phi\ \sigma'(\rd\phi).
$$

Next, we specialize our three simulation methods to this case. Toward this end, let $\theta=\sigma(\mathbb S^1) = \sigma'([0,2\pi))$ and let $\sigma_1 = \sigma/\theta$ and $\sigma_1'=\sigma'/\theta$ be probability measures. Throughout, we assume that we know how to simulate from $\sigma'_1$. All of our simulation methods depend on a tuning parameter $k$. The first method is based on the shot-noise representation given in Theorem \ref{thrm: series rep gen Vervaat} and is denoted SN. For this approximation we take the first $k$ terms in the series, which gives
$$
\sum_{i=1}^{k} \left(U_1U_2\cdots U_i\right)^{1/\theta} \zeta_i,
$$
where $U_1,U_2,\dots,U_k\iid U(0,1)$ and $\zeta_1,\zeta_2,\dots,\zeta_k$ are iid and are simulated by taking $\zeta_i=(\cos(\phi_i),\sin(\phi_i))$, where $\phi_i\sim\sigma_1'$. The second method is based on the triangular array approximation given in Theorem \ref{thrm: conv of powers} and is denoted TA. Here we evaluate $A_k$ for some large $k$. We take $\alpha=1$, $\ell(x)=1$ for all $x$, $N_k=\lfloor k\theta\rfloor$, 
and $c=\theta$, which gives
$$
\sum_{i=1}^{\lfloor k\theta\rfloor} U_i^k \zeta_i,
$$
where $U_1,U_2,\dots,U_k\iid U(0,1)$ and the $\zeta_i$ are as in the SN method. Finally, for the DS method, we assume 
either that $\sigma$ is of the form given in \eqref{eq: sigma k} or that we have an approximation $\sigma_k$ of $\sigma$ that is of this form. Either way, we have
$$
\sum_{i=1}^{k} Y_i s_i,
$$
where $Y_1,Y_2,\dots,Y_k$ independent random variables with $Y_i\sim \GD(a_i)$ for $i=1,2,\dots,k$ and $a_i,s_i$ are as in  \eqref{eq: sigma k}. Note that, in the first two methods, the directions are random, while in the third they are deterministic.

All that remains is to describe a systematic approach for discretizing $\sigma$. We start by selecting integer $k$, which is the number of terms in the support of the approximation, selecting $0=d_0<d_1<\cdots<d_k=2\pi$, and selecting $\phi_1,\phi_2,\dots,\phi_k$ with $d_{i-1}\le \phi_i<d_i$ for each $j=1,2,\dots,k$. Next we take $a_i= \sigma'([d_{i-1},d_i))$. We then approximate $\sigma$ and $\sigma'$ by
$$
\sigma_k = \sum_{i=1}^k a_i \delta_{s_i} \mbox{ and } \sigma_k' = \sum_{i=1}^k a_i \delta_{\phi_i}, 
$$
where $s_i=(\cos\phi_i,\sin\phi_i)$. For simplicity, in this paper, we take $d_i =2\pi i/k$ to be evenly spaced and $\phi_i = d_{i-1}$.

\section{Simulation Study}\label{sec: sim study}

In this section we perform a small-scale simulation study to compare the performance of the three methods discussed in Section \ref{sec: sim methods}. For simplicity, we focus on the bivariate case. In this context, we consider two models for the spectral measure: in the first the spectral measure is a beta distribution and in the second it has finite support.  In the latter case, the DS method is exact.

We begin with the first model. Here the spectral measure depends on two parameters $\alpha,\beta>0$ and is denoted by $\sigma^{\alpha,\beta}$. We assume that $\sigma^{\alpha,\beta}$ is a probability measure on $\mathbb S^1$ such that it is the distribution of the random vector $\zeta=(\cos(\phi),\sin(\phi))$ where $\phi$ has a beta distribution on $[0,2\pi)$, i.e., the distribution of $\phi$ has a density given by
$$
f(x) = \frac{(2\pi)^{1-\alpha - \beta}}{B(\alpha, \beta)} x^{\alpha -1}(2\pi - x)^{\beta - 1}, \ \ 0\le x<2\pi,
$$
where $B$ is the beta function. When $\alpha=\beta=1$ this reduces to the uniform distribution on $[0,2\pi)$. Since $\sigma^{\alpha,\beta}$ is a probability measure, we have $\theta=1$, and since the support of $\sigma^{\alpha,\beta}$ is infinite, all three simulation methods are approximate. They depend on a tuning parameter $k$ and, as $k$ increases, all methods get closer to simulating from $\MD(\sigma^{\alpha,\beta})$. Our goal is to understand which methods converge faster.

\begin{figure}
\center
\begin{tabular}{cc}
$\alpha=1,\beta=1$ & $\alpha=2,\beta=2$\vspace{-.35cm} \\
\includegraphics[width=0.45\textwidth]{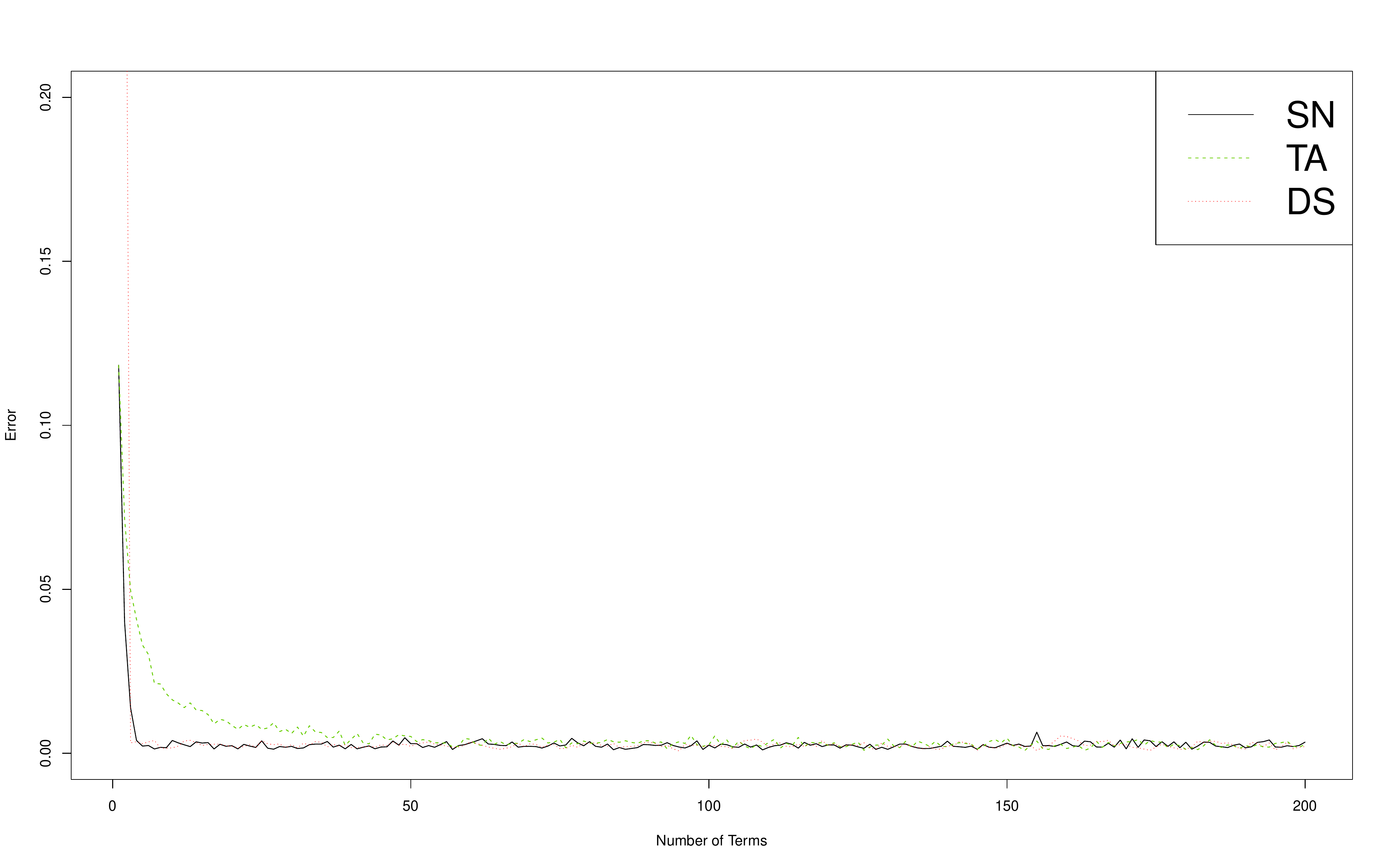} & \includegraphics[width=0.45\textwidth]{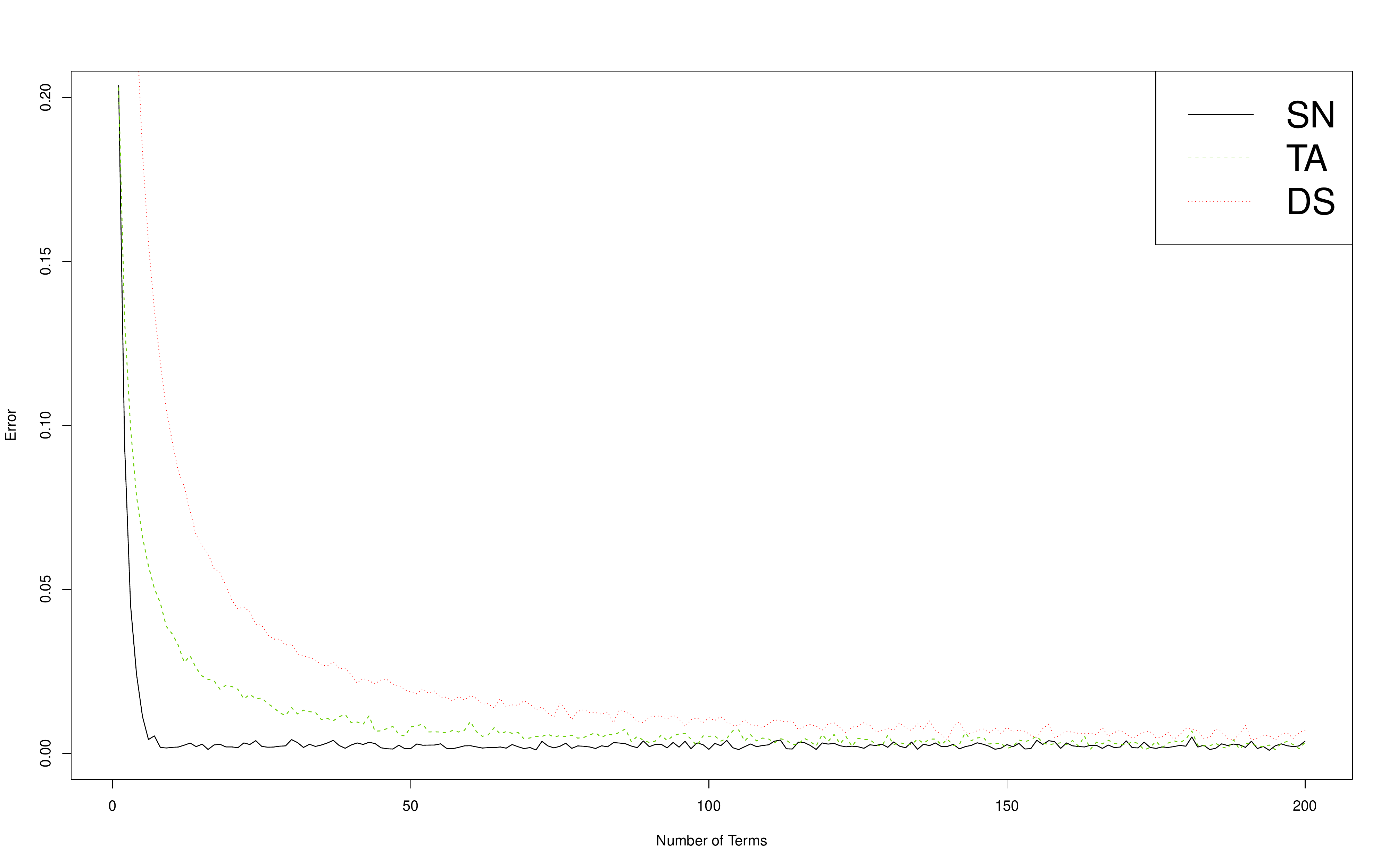} \\
$\alpha=2,\beta=5$ & $\alpha=5,\beta=1$ \vspace{-.35cm} \\
\includegraphics[width=0.45\textwidth]{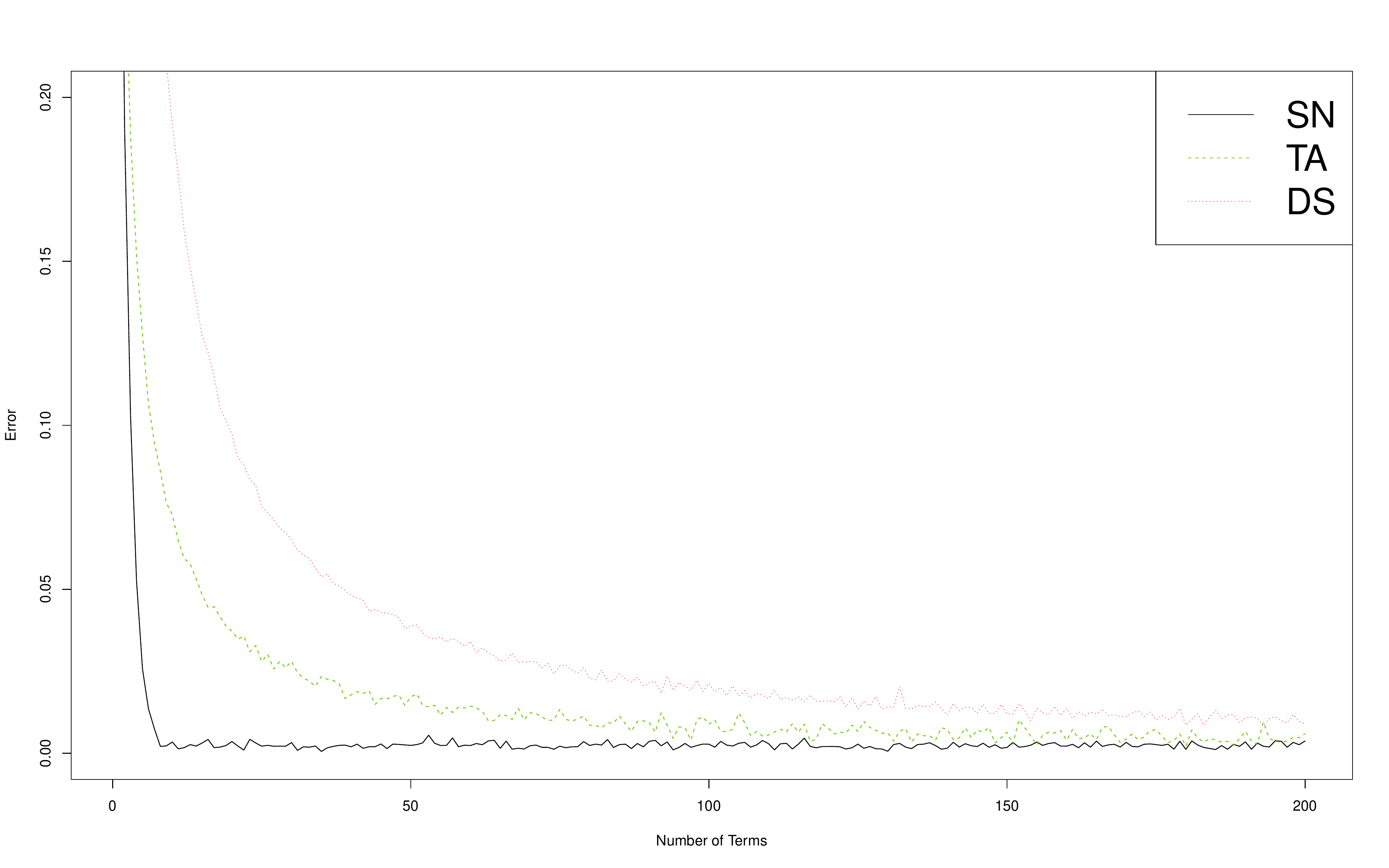} & \includegraphics[width=0.45\textwidth]{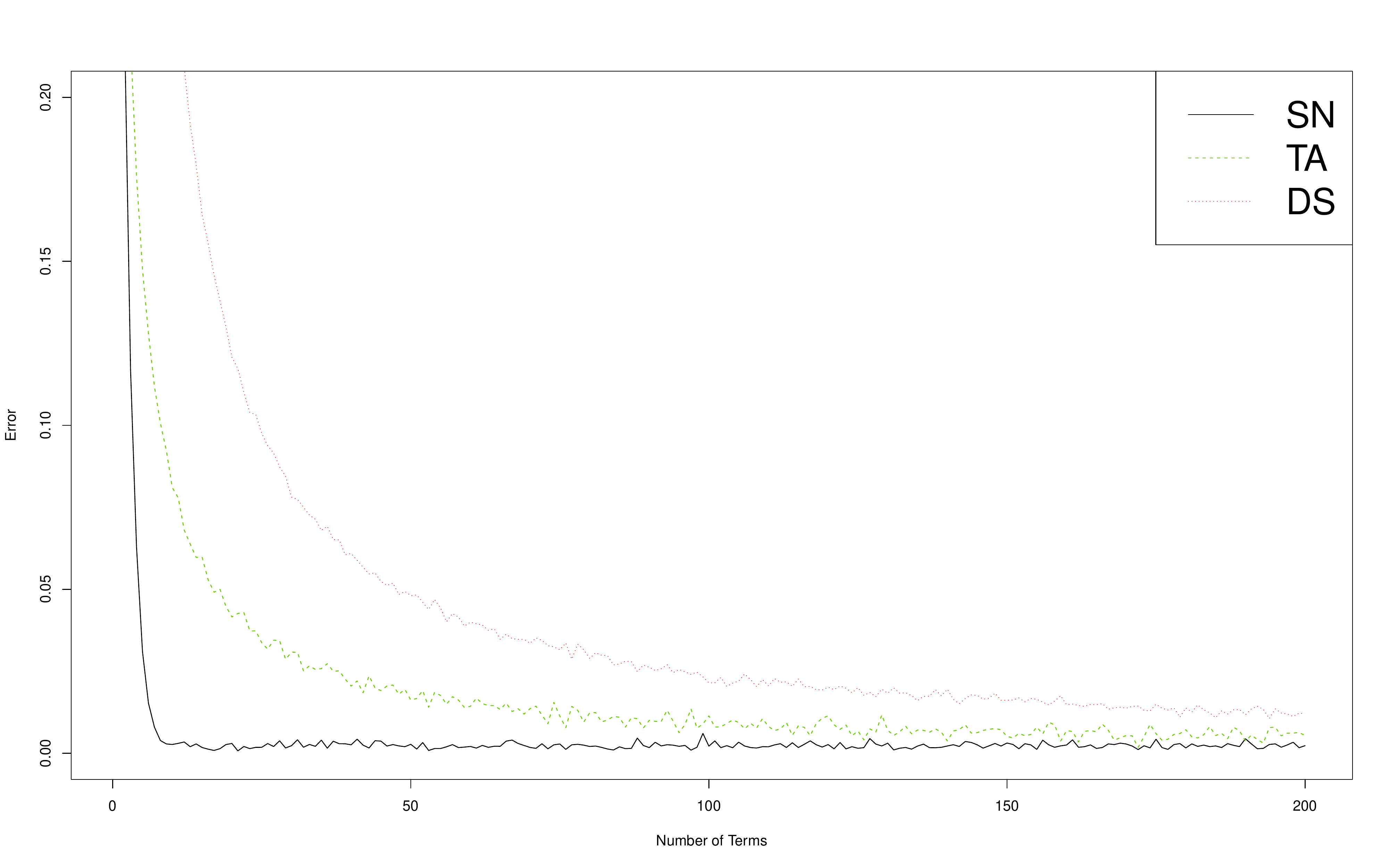}
\end{tabular}
\caption{Plots of errors in the beta model with all three methods and several choices of the parameters. The $x$-axis represents $k$, the number of terms in the sum, and the $y$-axis represents the errors.}\label{fig: error in sims}
\end{figure}

Our simulations are performed as follows. We begin by choosing values for $\alpha$ and $\beta$. We then select one of our three approximate simulation methods and a value for the tuning parameter $k$. Next, we use the method to approximately simulate $N$ observations from $\MD(\sigma^{\alpha,\beta})$. Using these, we estimate the means $m_1$, $m_2$ of both components, the variances $\sigma_1^2$, $\sigma^2_2$ of both components, and the covariance $\sigma_{12}$ between the components by using the empirical means $\bar x_1$, $\bar x_2$, the empirical variances $s_1^2$, $s_2^2$, and the empirical covariance $s_{12}$, respectively. We then quantify the error in the approximation by
\begin{equation}\label{eq:total error}
\mathrm E_k = \sqrt{(\bar x_1 - m_1)^2 + (\bar x_2 - m_2)^2 + (s_1^2 - \sigma_1^2)^2 + (s_2^2 - \sigma_2^2)^2 + (s_{12} - \sigma_{12})^2}.
\end{equation}
This was used to quantify errors in a similar context in \cite{Xia:Grabchak:2022}. The values of $m_1$, $m_2$, $\sigma_1^2$, $\sigma^2_2$, and $\sigma_{12}$ can be calculated by numerically integrating the formulas given in Section \ref{sec: sim methods}. When $\alpha$ and $\beta$ are integers, one can also evaluate the integrals explicitly using integration by parts. Note that only part of the error is due to the fact that the methods are approximate, the other part is due to Monte Carlo error as we are using only a finite number ($N$) of replications.

The results of our simulations are presented in Figure \ref{fig: error in sims}. Here we consider four combinations of the parameters $(\alpha, \beta)$: $(1, 1)$, $(2, 2)$, $(2, 5)$, and $(5,1)$. For each  method and each choice of the parameters, we let $k$ range from $1$ to $200$. In each case, we simulate $N=160000$ replications. Figure \ref{fig: error in sims} presents the value of $k$ plotted against the error $\mathrm E_k$. When $\alpha=\beta=1$, which corresponds to a uniform distribution, the SN and DS methods have similar performance and the error deceases very quickly. In comparison, the error for TA decreases much slower.  For the other cases, SN has the best performance, followed by TA, and then DS has the worst performance.

Next, we turn to our second model. Here $\sigma$ has a finite support, i.e, $\sigma = \sum_{i=1}^{r} a_i \delta_{s_i}$. For simplicity, we take $a_i=1/r$ for each $i=1,2,\dots,r$ and the $s_i$'s to be evenly spaced. We again quantify the error using $\mathrm E_k$ as given in \eqref{eq:total error} and use the formulas in Section \ref{sec: sim methods} to calculate the means, variances, and the covariance. This time they are all finite sums. The results of our simulations are given in Figures \ref{fig: error in sims discrete} and \ref{fig: error in sims discrete diff dir}. In Figure \ref{fig: error in sims discrete} we consider the case where $r=50$. Since the DS method is exact in this case, we just evaluate it once and plot the resulting error as a baseline.  Note that, due to Monte Carlo error, this error is not zero. For the other methods, we see that the error in SN decays quickly as $k$ increases, whereas for TA it decays slower. To get an idea of how the performance of the methods depends on the number of terms $r$, in Figure \ref{fig: error in sims discrete diff dir} we consider the case where $r=2,20,100$. We can see that both SN and TA work better when $r$ is larger. We do not include DS in these simulations as it is exact in this case.

\begin{figure}
\center
\includegraphics[width=0.45\textwidth]{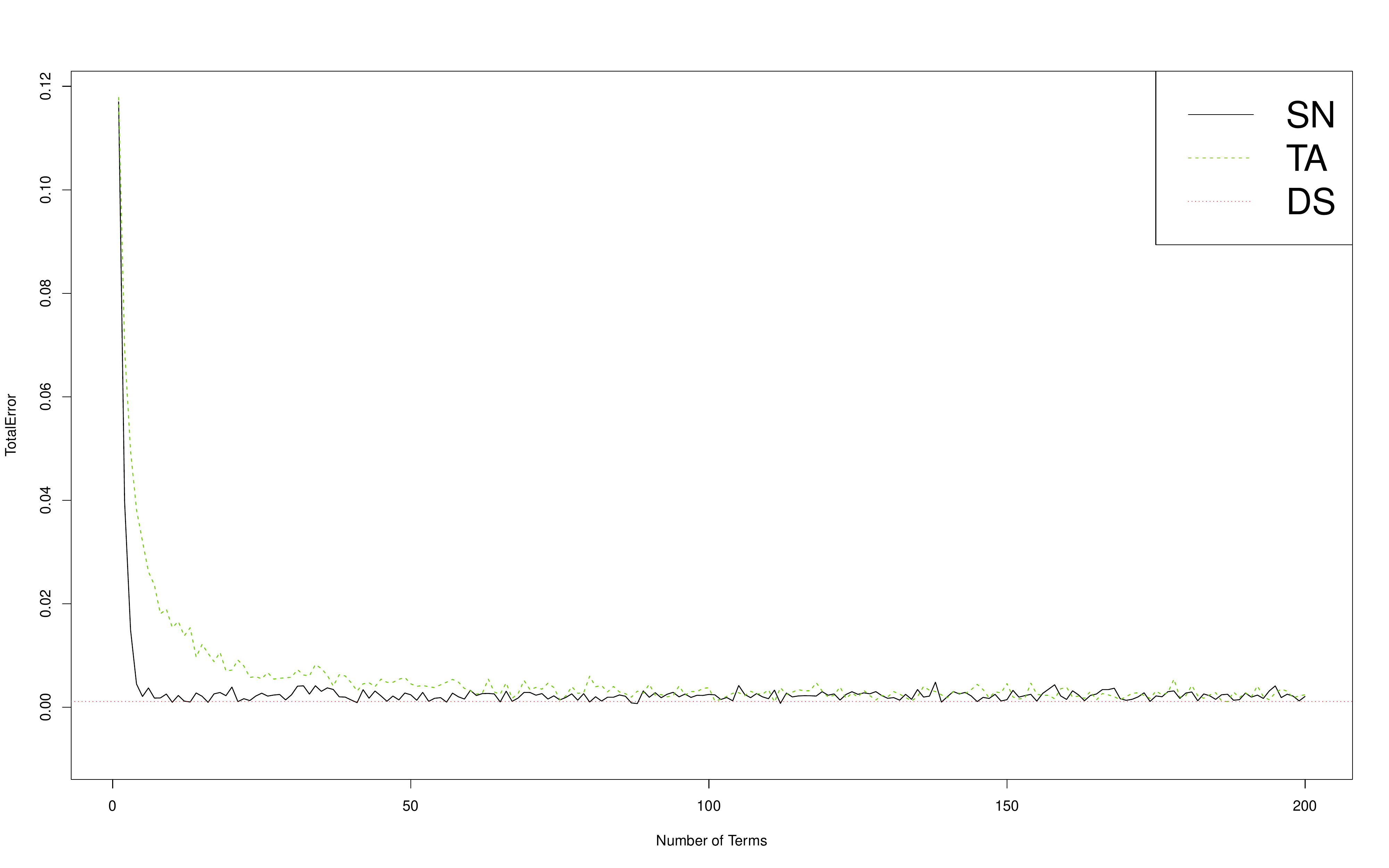} 
\caption{Plots of errors when $\sigma$ has $50$ evenly spaced directions. The $x$-axis represents $k$, the number of terms in the sum, and the $y$-axis represents the errors. Since DS is exact in this case, it is presented as a baseline.}\label{fig: error in sims discrete}
\end{figure}

\begin{figure}
\center
\begin{tabular}{cc}
{\footnotesize SN} & {\footnotesize TA} \vspace{-.35cm}\\
\includegraphics[width=0.45\textwidth]{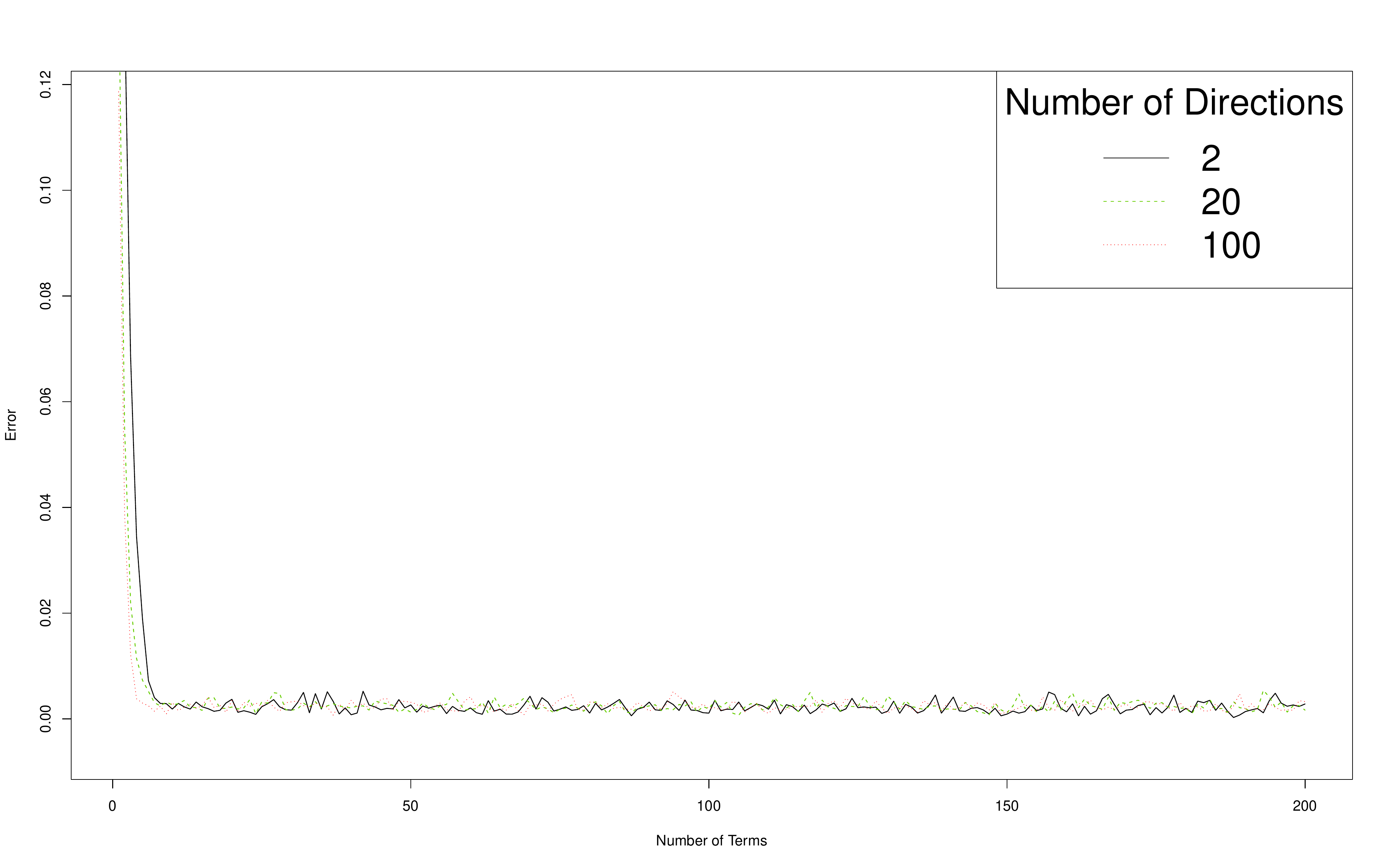} & \includegraphics[width=0.45\textwidth]{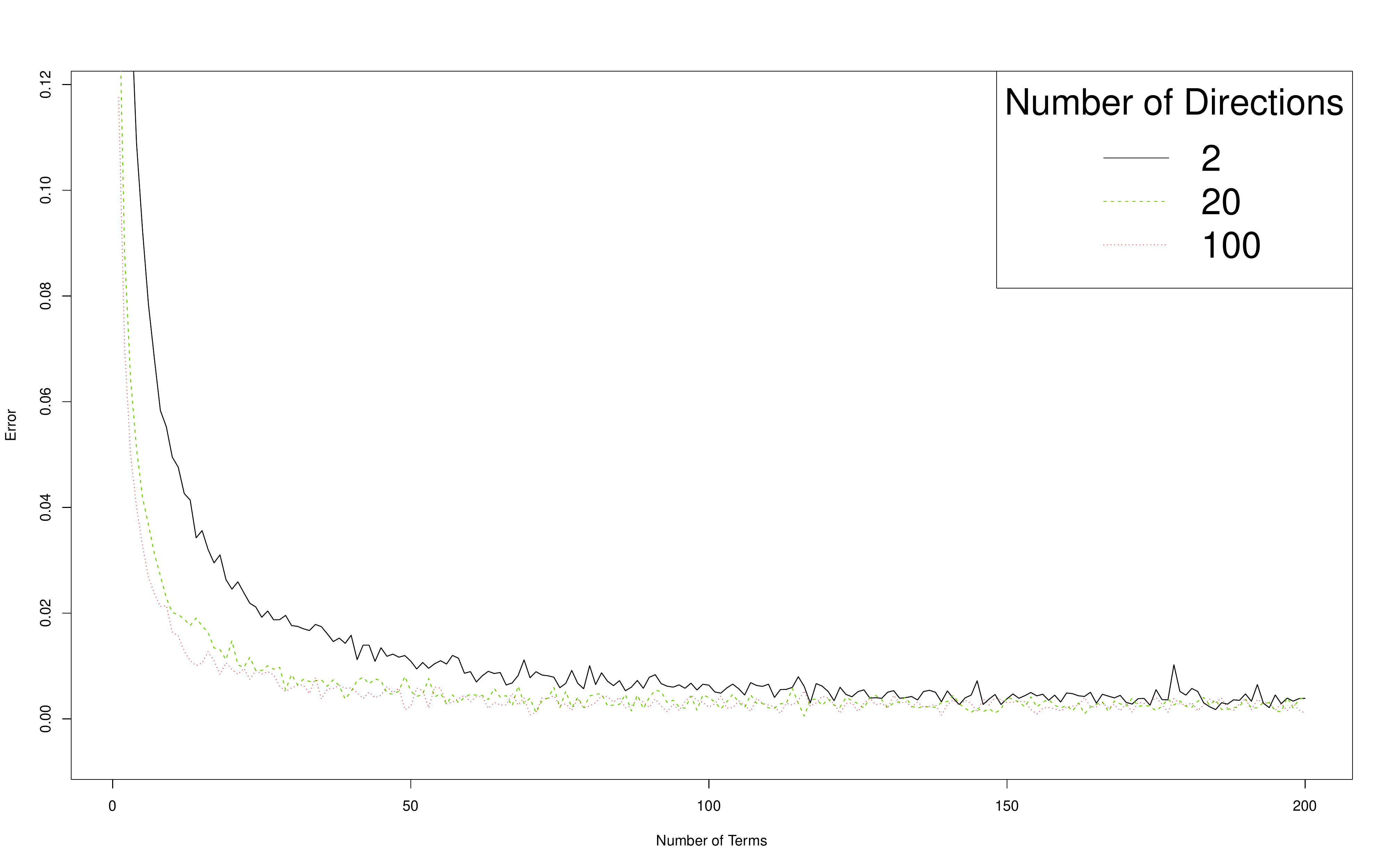}
\end{tabular}
\caption{Plots of errors when $\sigma$ has $r=2,20,100$ evenly spaced directions. The plot of the left is for the SN method and the one on the right is for the TA method. The $x$-axis represents $k$, the number of terms in the sum, and the $y$-axis represents the errors. DS is not presented as it is exact in this case.}\label{fig: error in sims discrete diff dir}
\end{figure}

Overall, SN converges quickly in all of the situations that we considered. It is also easy to implement. TA is also easy to implement, but it needs more terms to converge. DS is an exact method  when $\sigma$ has finite support.  However, it is slightly harder to implement as it requires one to simulate GD random variables, which is a bit more involved. 

\section{Proofs}\label{sec: proofs}

\begin{proof}[Proof of Lemma \ref{lemma: moments}]
We get \eqref{eq: M alpha 2} from \eqref{eq: M alpha} by change of variables. Next, \eqref{eq: M alpha 2}  implies that $M_\alpha(\mathbb R^d) =\int_{\mathbb R^d}\int_0^\infty r^{\alpha-1}\rd r\nu(\rd x)$ and so  $M_\alpha(\mathbb R^d)=\infty$ for any $\nu\ne0$. For the next part, note that, for $p>0$,  l'H\^opital's rule gives
$$
\int_a^\infty r^{\alpha-1} e^{-rp}\rd r\sim  p^{-1}a^{\alpha-1} e^{-ap} \ \ \mbox{as} \ \ a\to\infty.
$$
Thus, there exists a $K\ge e$ such that, if $|y|>K$, then
$$
\frac{1}{2p}(\log|y|)^{\alpha-1} e^{-p\log|y|}\le \int_{\log|y|}^\infty r^{\alpha-1} e^{-rp}\rd r\le  \frac{2}{p}(\log|y|)^{\alpha-1} e^{-p\log|y|}.
$$
Hence,
\begin{eqnarray*}
\int_{|x|\le1}|x|^pM_\alpha(\rd x) &=& \int_{\mathbb R^d}|y|^p\int_0^{|y|^{-1}\wedge1} (-\log r)^{\alpha-1}r^{p-1}\rd r \nu(\rd y)\\
&=&\int_{\mathbb R^d}|y|^p\int_{-\log(|y|^{-1}\wedge1)}^\infty r^{\alpha-1}e^{-pr}\rd r \nu(\rd y)\\
&\le& \int_{|y|\le K}|y|^p\nu(\rd y)\int_0^\infty r^{\alpha-1}e^{-pr}\rd r  + \frac{2}{p}\int_{|y|>K} (\log|y|)^{\alpha}\nu(\rd y)
\end{eqnarray*}
and similarly
\begin{eqnarray*}
\int_{|x|\le1}|x|^pM_\alpha(\rd x) &\ge& \int_{|y|\le K}|y|^p\nu(\rd y)\int_{\log K}^\infty u^{\alpha-1}e^{-pr}\rd r  + \frac{1}{2p}\int_{|y|>K} (\log|y|)^{\alpha}\nu(\rd y).
\end{eqnarray*}
For the last part, we have
\begin{eqnarray*}
\int_{|x|>1}|x|^pM_\alpha(\rd x) &=& \int_{|y|>1}|y|^p\int_{|y|^{-1}}^1 (-\log r)^{\alpha-1}r^{p-1}\rd r \nu(\rd y)\\
&=&\int_{|y|>1}|y|^p\int_0^{\log|y|} r^{\alpha-1}e^{-pr}\rd r \nu(\rd y)\\
&\le&\frac{\Gamma(\alpha)}{p^\alpha} \int_{|y|>1}|y|^p\nu(\rd y)
\end{eqnarray*}
and
\begin{eqnarray*}
\int_{|x|>1}|x|^pM_\alpha(\rd x) &=&\int_{|y|>1}|y|^p\int_0^{\log|y|} r^{\alpha-1}e^{-pr}\rd r \nu(\rd y)\\
&\ge&\int_{|y|>e}|y|^p \nu(\rd y)\int_0^{1} r^{\alpha-1}e^{-pr}\rd r,
\end{eqnarray*}
which completes the proof.
\end{proof}

\begin{proof}[Proof of Theorem \ref{thrm: integ rep}.]
Let $M_\alpha$ be as in \eqref{eq: M alpha} and let $C_1$ be the cgf of $Y_1$. By Corollary 2.3 in \cite{Sato:2006}, the stochastic integral is absolutely definable so long as we have the finiteness of 
\begin{eqnarray*}
 &&\int_0^\infty \left|C_1\left(z e^{-(\alpha s/\theta)^{1/\alpha}} \right)\right| \rd s\\
&&\qquad\le |z|| \gamma' | \int_0^\infty e^{-(\alpha s/\theta)^{1/\alpha}} \rd s + \theta^{-1} \int_0^\infty \int_{\mathbb{R}^d}1 \wedge(|z||x|e^{-(\alpha s/\theta)^{1/\alpha}}) \nu(\rd x) \rd s\\
&&\qquad\le |z|| \gamma' | \int_0^\infty e^{-(\alpha s/\theta)^{1/\alpha}} \rd s + \int_{\mathbb{R}^d}2 \wedge(|z||x|) M_\alpha (\rd x) <\infty,
\end{eqnarray*}
where we use \eqref{eq: M alpha 2}, \eqref{eq: finite for MVP alpha}, and the fact that $|e^{ia}-1|\le2\wedge|a|$ for $a\in\mathbb R$, see Section 26 in \cite{Billingsley:1995}. Similarly, by Proposition 2.2 in \cite{Sato:2006}, the cgf of the stochastic integral is given by
\begin{eqnarray*}
&&\int_0^\infty C_1\left(z e^{-(\alpha s/\theta)^{1/\alpha}} \right) \rd s\\
&&\qquad =\int_0^\infty\left(i  \langle \gamma', z \rangle e^{-(\alpha s/\theta)^{1/\alpha}} +
	\int_{\mathbb{R}^d}\left(e^{i\left\langle z e^{-(\alpha s/\theta)^{1/\alpha}}, x\right\rangle } - 1\right) \nu'(\rd x) \right) \rd s\\
&&\qquad= i \langle \gamma', z \rangle \int_0^\infty e^{-(\alpha s/\theta)^{1/\alpha}} \rd s +
	\int_{\mathbb{R}^d}\left(e^{i\left\langle z, x\right\rangle } - 1\right) M_\alpha (\rd x).
\end{eqnarray*}
From here, the result follows from the fact that $ \int_0^\infty e^{-(\alpha s/\theta)^{1/\alpha}} \rd s=\theta\Gamma(\alpha)$.
\end{proof}

\begin{proof}[Proof of Theorem \ref{thrm: series rep gen Vervaat}]
The result follows by a general shot-noise representation of L\'evy processes given in \cite{Rosinski:2001}, see also Theorem 6.2 in \cite{Cont:Tankov:2004}. Define
$$
H(r,y) = ye^{-(r/\theta)^{1/\alpha}}, \ \ r>0,\ y\in\mathbb R^d
$$
and note that $|H(r,y)|$ is nonincreasing in $r$ for each $y$. Let
$$
\lambda(r,B) = P(H(r,Y_1)\in B) = P(Y_1\in e^{(r/\theta)^{1/\alpha}}B), \ \ r>0, \ B\in\mathfrak B(\mathbb R^d),
$$
and
$$
A(s) = \int_0^s \int_{|y|\le1} y \lambda(r,\rd y)\rd r, \ \ s\ge0.
$$
From \eqref{eq: M alpha 2} it follows that for $B\in\mathfrak B(\mathbb R^d)$ 
\begin{eqnarray*}
\int_0^\infty \lambda(r,B) \rd r &=& \int_0^\infty \int_{\mathbb R^d} 1_{B}(ye^{-(r/\theta)^{1/\alpha}}) \nu_1(\rd y)\rd r 
=M_\alpha(B).
\end{eqnarray*}
and by dominated convergence that
\begin{eqnarray*}
\lim_{s\to\infty}A(s) &=& \int_0^\infty \int_{|y|\le e^{(r/\theta)^{1/\alpha}}} ye^{-(r/\theta)^{1/\alpha}} \nu_1(\rd y)\rd r = \int_{|x|\le1}xM_\alpha(\rd x).
\end{eqnarray*}
We can use dominated convergence since by \eqref{eq: finite for MVP alpha}
\begin{eqnarray*}
\int_0^s \int_{|y|\le e^{(r/\theta)^{1/\alpha}}} |y|e^{-(r/\theta)^{1/\alpha}} \nu_1(\rd y)\rd r 
&\le& \int_0^\infty \int_{|y|\le e^{(r/\theta)^{1/\alpha}}} |y|e^{-(r/\theta)^{1/\alpha}} \nu_1(\rd y)\rd r \\
&=& \int_{|x|\le1}|x|M_\alpha(\rd x)<\infty.
\end{eqnarray*}
From here Theorem 6.2 in \cite{Cont:Tankov:2004} gives the result for $T=1$.  The results for general $T$ follows from the fact that the L\'evy process $\{X_t:0\le t\le T\}$ where $X_1\sim\ID_0(M,\gamma)$ has the same distribution as $\{X'_t:0\le t\le 1\}$ where $X'_1\sim\ID_0(TM,T\gamma)$. The result for $\alpha=1$ follows from the well-known and easily checked fact that $e^{-E_i}\sim U(0,1)$.
\end{proof}

\begin{proof}[Proof of Theorem \ref{thrm: conv of powers}]
To prove this result, it suffices to verify the conditions for convergence of sums of infinitesimal triangular arrays. Such conditions can be found in, e.g., \cite{Sato:1999}, \cite{Meerschaert:Scheffler:2001}, or \cite{Kallenberg:2002}. A version of these is as follows:\\
1. For any $C\in\mathscr B(\mathbb S^{d-1})$ with $\nu\left(\left\{y\in\mathbb R^d\setminus\{0\}: \frac{y}{|y|}\in\partial C\right\}\right)=0$ and $s>0$, then 
$$
\lim_{n \to \infty} N_n \rP\left(|T_1|X_1^n>s, \frac{T_1}{|T_1|}\in C\right) = M_\alpha\left(\left\{x\in\mathbb R^d:|x|>s, \frac{x}{|x|}\in C\right\}\right).
$$
2.
$$
\lim_{\epsilon\downarrow0}\limsup_{n\to\infty} N_n \rE\left[X_1^n |T_1| 1(X_1^n |T_1| <\epsilon)\right] =0.
$$

We begin by noting that, by L'H\^opital's rule, for any $t>s>0$ we have
$$
n\left(1-(s/t)^{1/n} \right) \sim \log(t/s).
$$
Let $\ell_0(t) = \ell(1/t)$ and note that $\ell_0$ is slowly varying at $\infty$, i.e.\ for every $t>0$
$$
\lim_{x\to\infty} \frac{\ell_0(xt)}{\ell_0(x)}=1.
$$
Proposition 2.6 in \cite{Resnick:2007} implies that for $t>s>0$ we have
$$
\ell\left(1-(s/t)^{1/n} \right)=\ell_0\left(\frac{n}{n(1-(s/t)^{1/n})} \right) \sim \ell_0\left(n \right) = \ell(1/n)
$$
and hence
\begin{eqnarray*}
\lim_{n\to\infty} N_nP(X_1 >(s/t)^{1/n}) = \lim_{n\to\infty} cn^\alpha (1-(s/t)^{1/n})^\alpha \frac{ \ell(1-(s/t)^{1/n})}{\ell(1/n)}
= c (\log(t/s))^{\alpha}.
\end{eqnarray*}
By Theorem 20.3 in \cite{Billingsley:1995}
\begin{eqnarray*}
\lim_{n\to\infty}  N_n \rP\left(|T_1|X_1^n>s, \frac{T_1}{|T_1|}\in C\right) &=& \lim_{n\to\infty} \int_{|y|>s, \frac{y}{|y|}\in C}N_nP(X_1 >(s/|y|)^{1/n}) \nu_0(\rd y)\nonumber\\
    &=& \int_{|y|>s, \frac{y}{|y|}\in C}  \lim_{n\to\infty} N_n P(X_1 >(s/t)^{1/n}) \nu_0(\rd y)\nonumber\\
        &=& c \int_{|y|>s, \frac{y}{|y|}\in C} \left(\log(|y|/s)\right)^\alpha  \nu_0(\rd y)\nonumber\\
        &=&M_\alpha\left(\left\{x\in\mathbb R^d:|x|>s, \frac{x}{|x|}\in C\right\}\right),
\end{eqnarray*}
where in the second line we interchange limit and integration using dominated convergence. To see that we can do this, assume that $n$ is large enough that $N_nn^{-\alpha}\ell(1/n)\le 2c$, let $t>s>0$, $a = \log(t/s)>0$, and fix $\delta\in(0,\alpha)$.  By the Potter bounds, see e.g.\ Theorem 1.5.6 in \cite{Bingham:Goldie:Teugels:1987}, there exists a constant $A>0$ with
\begin{eqnarray}\label{eq: log bound}
N_nP(X_1 >(s/t)^{1/n}) &\le& 2c \left(\frac{1-(s/t)^{1/n}}{1/n}\right)^\alpha \frac{\ell(1-(s/t)^{1/n})}{\ell(1/n)} \nonumber\\
&=& 2c \left(\frac{1-e^{-a/n}}{1/n}\right)^\alpha \frac{\ell(1-e^{-a/n})}{\ell(1/n)}\nonumber\\
&\le& A \left(\left(\frac{1-e^{-a/n}}{1/n}\right)^{\alpha+\delta}\vee  \left(\frac{1-e^{-a/n}}{1/n}\right)^{\alpha-\delta}\right)\nonumber\\
&\le& A \left(( \log(t/s))^{\alpha+\delta}\vee ( \log(t/s))^{\alpha-\delta}\right)\le C_s t^\gamma,
\end{eqnarray}
where $C_s$ is some constant depending on $s$ and we use the fact that $1-e^{-x}\le x$.  Next consider
\begin{eqnarray*}
&&\lim_{\epsilon\downarrow0}\limsup_{n\to\infty} N_n \rE\left[X_1^n |T_1| 1_{[X_1^n |T_1| <\epsilon]}\right] \\
&&\qquad=\lim_{\epsilon\downarrow0}\limsup_{n\to\infty} N_n \int_0^\epsilon P(X_1^n |T_1| 1_{[X_1^n |T_1| <\epsilon]}>s)\rd s\\
&&\qquad\le\lim_{\epsilon\downarrow0}\limsup_{n\to\infty} N_n \int_0^\epsilon P(X_1^n |T_1| >s)\rd s\\
&&\qquad =\lim_{\epsilon\downarrow0} \limsup_{n\to\infty} \int_0^\epsilon \int_{s\le |y|} N_nP(X_1^n >s/|y|)\nu_0(\rd y)\rd s\\
&&\qquad \le A\lim_{\epsilon\downarrow0}\int_{|y|>0} \int_0^{\epsilon} \left(( \log(|y|/s))^{\alpha+\delta}\vee ( \log(|y|/s))^{\alpha-\delta}\right)\rd s \nu_0(\rd y)\\
&&\qquad =A\lim_{\epsilon\downarrow0}\int_{|y|>0} \int_{\log(|y|/\epsilon)}^\infty \left(s^{\alpha+\delta}\vee s^{\alpha-\delta}\right)e^{-s}\rd s |y|\nu_0(\rd y)\\
&&\qquad \le A\lim_{\epsilon\downarrow0}\epsilon^{1-\gamma} \int_{|y|>0} \int_{\log(|y|/\epsilon)}^\infty \left(s^{\alpha+\delta}\vee s^{\alpha-\delta}\right)e^{-s\gamma}\rd s |y|^\gamma \nu_0(\rd y)\\
&&\qquad \le A\lim_{\epsilon\downarrow0}\epsilon^{1-\gamma} \int_{\mathbb R^d} |y|^\gamma \nu_0(\rd y) \int_{0}^\infty \left(s^{\alpha+\delta}\vee s^{\alpha-\delta}\right)e^{-s\gamma}\rd s =0,
\end{eqnarray*}
where the fifth line follows by \eqref{eq: log bound} and the sixth by change of variables.
\end{proof}

\end{document}